\theoremstyle{plain}
\theoremstyle{definition}\newtheorem{theorem}{Theorem}[section]
\theoremstyle{plain}\newtheorem{lemma}[theorem]{Lemma}
\theoremstyle{plain}\newtheorem{coro}[theorem]{Corollary}
\theoremstyle{plain}
\theoremstyle{remark}\newtheorem{remark}{Remark}[section]
\newcommand{\B}{\Big}
\newcommand{\be}{\begin{equation}}
\newcommand{\ee}{\end{equation}}
\newcommand{\ba}{\begin{aligned}}
	\newcommand{\ea}{\end{aligned}}
\newcommand{\f}{\frac}
\newcommand{\ben}{\begin{enumerate}}
	\newcommand{\een}{\end{enumerate}}
\newcommand{\Rmnum}[1]{\expandafter\@slowromancap\romannumeral #1@}
\numberwithin{equation}{section}
\begin{document}
	\title{ Refined blow up criteria for the full compressible Navier-Stokes equations involving temperature  }
	\author{Quanse Jiu\footnote{
School of Mathematical Sciences, Capital Normal University, Beijing 100048, PR China. Email: jiuqs@cnu.edu.cn},\,\,\,\,Yanqing Wang\footnote{ Department of Mathematics and Information Science, Zhengzhou University of Light Industry, Zhengzhou, Henan  450002,  P. R. China Email: wangyanqing20056@gmail.com}\;~ and\,
	Yulin Ye\footnote{School of Mathematics and Statistics,
Henan University,
Kaifeng, 475004,
P. R. China Email: ylye@vip.henu.edu.cn   }}
\date{}
\maketitle
\begin{abstract}
In this paper, inspired by the study of the energy flux in local energy inequality of the 3D  incompressible Navier-Stokes equations, we improve almost all the blow up criteria
involving temperature to allow  the temperature  in its scaling  invariant  space  for  the 3D full compressible Navier-Stokes equations.
Enlightening  regular  criteria via pressure $\Pi=\f{\text {divdiv}}{-\Delta}(u_{i}u_{j})$ of the 3D  incompressible Navier-Stokes equations on bounded domain,
we generalize   Beirao da Veiga's result in \cite{[da Veiga95]} from the incompressible Navier-Stokes equations
  to the isentropic compressible Navier-Stokes system in   the case away from vacuum.

  \end{abstract}
	\noindent {\bf MSC(2000):}\quad 35B65, 35D30, 76D05 \\\noindent
	{\bf Keywords:} full compressible Navier-Stokes equations;   strong solutions; blow-up criteria
	\section{Introduction}
	\label{intro}
	\setcounter{section}{1}\setcounter{equation}{0}
We study the following  system of  Newton heat-conducting compressible fluid in three-dimensional space
\be\left\{\ba\label{FNS}
	&\rho_t+\nabla \cdot (\rho u)=0, \\
&\rho  u_t+\rho u\cdot\nabla u+\nabla
P(\rho,\theta)-\mu\Delta u-(\mu+\lambda)\nabla\text{div\,}u=0,\\
& c_{v}[\rho \theta_t+\rho u\cdot\nabla\theta] +P\text{div\,}u-\kappa\Delta\theta=\frac{\mu}{2}\left|\nabla u+(\nabla u)^{\text{tr}}\right|^2+\lambda(\text{div\,}u)^2,\\&(\rho, u, \theta)|_{t=0}=(\rho_0, u_0, \theta_0),
	\ea\right.\ee
 where $\rho,\, u,\, \theta$ stand for the flow density, velocity and the absolute temperature, respectively. The scalar function $P$ represents the   pressure, the state equation of which is  determined by
 \be
 P=R\rho\theta,  R>0,
 \ee
 and $\kappa$ is a positive constant.   $\mu$ and $\lambda$ are the coefficients of viscosity, which are assumed to be constants, satisfying the following physical restrictions:
\be\label{nares}
\mu>0,\ 2\mu+3\lambda\ge0.\ee  The
initial
conditions satisfy
\be\label{non-boundary}
\rho(x,t)\rightarrow0,\ u(x,t)\rightarrow0,\ \theta(x,t)\rightarrow0,\ \mathrm{as}\ |x|\rightarrow\infty,\ \mathrm{for}\ t\ge0.
\ee
Note that if  the triplet $(\rho(x,t),u(x, t),\theta(x, t) )  $ solves system \eqref{FNS}, then the triplet $(\rho_{\lambda},u_{\lambda},\theta_{\lambda})  $ is also a solution of \eqref{FNS} for any $\lambda\in R^{+},$ where
\be\label{eqscaling}
\rho_{\lambda}=  \rho(\lambda^{2}t,\lambda x),~~~~~u_{\lambda}=\lambda u(\lambda^{2}t,\lambda x),~~~~~\theta_{\lambda}=\lambda^{2} \theta(\lambda^{2}t,\lambda x).
\ee
There have been huge literatures on well-posedness of solutions to compressible Navier-Stokes equations, we only give a brief survey here. For the isentropic case, The first major breakthrough was made by P.L.Lions in \cite{lions}, where he first gave the global existence of weak solutions to the compressible Navier-Stokes equations when the constant $\gamma\geq \f{3N}{N+2}$ for $N=2$ or $3$. Then Feireisl et al.in \cite{FNP} improved the Lions'work to $\gamma >\f{3}{2}$ for $N=3$. In \cite{JZ}, Jiang and Zhang considered the spherical symmetric initial data and relaxed the restriction on $\gamma$ to the case $\gamma>1$. Huang, Li and Xin in \cite{hlx} obtained the global existence of classical solutions provided the initial energy is sufficiently small but the oscillation can be large. When the shear viscosity coefficient $\mu=costant>0$ and bulk viscosity satisfies $\lambda(\rho)=\rho^\beta$, Vaigant-Kazhikhov in \cite{VK} showed the two-dimensional system admits a unique strong solution in the periodic domain when $\beta>3$, it's emphasized that the initial data contains no vacuum and can be arbitrarily large. For the case involving heat conductivity, Feireisl in \cite{Feireisl} got the existence of variational solutions when the dimension  $N\geq 2$. It is noted that this is the very first attempt work in global existence of weak solutions for full compressible Navier-Stokes equations in high dimensions. Matsumura-Nishida in \cite{MN} obtained the global classical solution for initial data close to a non-vacuum equilibrium in some Sobolev space $H^s$. Later, Hoff in \cite{Hoff1995} considered the discontinuous case.
In \cite{[CK]}, the local strong solutions of equations \eqref{FNS} with initial data containing vacuum was established by Cho and   Kim (for details, see Theorem \ref{localwith vacuum} in section 2).
On the other hand, when the initial data contain vacuums, finite time blow-up of smooth solutions to the compressible Navier-Stokes system was discussed by Xin
\cite{[Xin]}, Xin and Yan \cite{[XY]} and Jiu, Wang and Xin \cite{[JWX]}. Since then a number of papers have been devoted to the study of blow up mechanism of strong solutions mentioned above  in \eqref{FNS}  and many blow up criteria are established (see for example, \cite{[DW],[FJO],[WZ13],[WZ17],[LXZ],[WL],[CY],[CCK],[HLX],[FJ],
[HL],[SWZ1],[SWZ2],[SZ],[HX]} and references therein).  
In particular, we list some works where vacuum is included as follows:

 Suppose that   $0<T^{\ast}<\infty$
 is the maximal time of existence of a strong  solution of system \eqref{FNS}.

\noindent Fan, Jiang and Ou  \cite{[FJO]}
\be\label{FJO}
\lim\sup\limits_{t\nearrow
T^\star}\left(\|\nabla
u\|_{L^1(0,t;L^\infty)}+\|\theta\|_{L^\infty(0,t;L^\infty)}\right)=\infty,~~~(\lambda< 7\mu);
\ee
Wen and Zhu  \cite{[WZ13]},
\be\label{wzadv}
\lim\sup\limits_{t\nearrow
T^\star}\left(\|\rho\|_{L^\infty(0,t;L^\infty)}+\|\theta\|_{L^\infty(0,t;L^\infty)}\right)=\infty, ~~~(\lambda<3\mu);
\ee
Huang, Li and Wang \cite{[HLW]}
\be \label{HLW}
\lim\sup\limits_{t\nearrow
T^\star}\left(\|\text{div\,}u\|_{L^1(0,t;L^\infty)}+\|u\|_{L^p(0,t;L^q)}\right)=\infty, ~~\f2p+\f3q=1,~q>3;\ee
Huang  and Li   \cite{[HL2]}
\be \label{HL2}
\lim\sup\limits_{t\nearrow
T^\star}\left(\|\rho\|_{L^\infty(0,t;L^\infty)}+\|u\|_{L^p(0,t;L^q)}\right)=\infty, ~~\f2p+\f3q=1,~q>3;\ee
Li, Xu and  Zhu  \cite{[LXZ]}
\be\label{li}
\lim\sup\limits_{t\nearrow
T^\star}\left(\|\rho\|_{L^\infty(0,t;L^\infty)}+\|P\|_{L^\infty(0,t;L^\infty)}\right)=\infty, ~~~(\lambda<3\mu,~\kappa=0);
\ee
Wen and Zhu  \cite{[WZ17]},
\be\label{wzsiam}
\lim\sup\limits_{t\nearrow
T^\star}\left(\|\rho\|_{L^\infty(0,t;L^\infty)}+\|\rho\theta\|_{L^4(0,t;L^\f{12}{5})}\right)=\infty, ~~~(\lambda<3\mu);
\ee
Wang and Li \cite{[WL]}
\be\label{wl}
\lim\sup\limits_{t\nearrow
T^\star}\left(\|\text{div\,}u\|_{L^2(0,t;L^\infty)}+\|\theta\|_{L^\alpha(0,t;L^\beta)}\right)=\infty, ~~\f3\alpha+\f2\beta\geq2,~ \f1\alpha+\f2\beta\leq1,~1\leq\alpha\leq2,~\beta\geq4;
\ee
Choe and Yang \cite{[CY]}
\be
\lim\sup\limits_{t\nearrow
T^\star}\left(\|\rho\|_{L^\infty(0,t;L^\delta)}+\|\text{div\,} u\|_{L^\infty(0,t;L^3)}+\|\Delta\theta\|_{L^\infty(0,t;L^2)}\right)=\infty, ~~\text{for some} ~~ \delta\in(1,\infty).\ee
 The interesting of \eqref{HLW} and \eqref{HL2} is that they are independent of the temperature and they are in scaling invariant norm in the sense of  \eqref{eqscaling}. From \eqref{eqscaling},  the natural candidate  invariant spaces of temperature $\theta$ is ${L^{q}(0,T;L^{q})}$ with $\f2p+\f3q=2$. Therefore, a natural question is whether one can show blow up criteria for the full compressible
Navier-Stokes equations involving temperature in its scaling invariant space. The first objective of this paper is to address this issue and we obtain
\begin{theorem}
\label{th1.1}
Suppose $(\rho,u,\theta)$ is the unique strong solution in Theorem \ref{localwith vacuum}   and $\lambda<3\mu$.
If the maximal existence time $T^*$ is finite, then there holds
\be\limsup_{t\nearrow T^*}
  \|\rho\|_{L^{\infty}(0,t;L^{\infty})}+\|\theta\|_{L^{p}(0,t;L^{q})}= \infty,\label{wy1}\ee
  for some $p, q$ satisfying
    $$\f{2}{p}+\f{3}{q}=2,\ \  q>\f{3}{2}.$$
\end{theorem}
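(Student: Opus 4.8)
The plan is to argue by contradiction: assume that $T^*<\infty$ but that the left-hand side of \eqref{wy1} is finite, so that there is a constant $C_0$ with $\|\rho\|_{L^\infty(0,T^*;L^\infty)}\le C_0$ and $\|\theta\|_{L^p(0,T^*;L^q)}\le C_0$. The goal is then to derive a priori bounds on $(\rho,u,\theta)$ up to time $T^*$ that are strong enough to invoke the local existence theorem (Theorem \ref{localwith vacuum}) with $(\rho,u,\theta)(\cdot,T^*)$ as new data, thereby extending the solution beyond $T^*$ and contradicting maximality. Throughout, the uniform upper bound on $\rho$ is what converts every pressure estimate into a temperature estimate, since $P=R\rho\theta$ gives $\|P\|_{L^q}\le RC_0\|\theta\|_{L^q}$; this is the mechanism that feeds the scaling-invariant hypothesis on $\theta$ into the energy hierarchy.

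First I would record the basic energy law, obtained by testing the momentum equation with $u$ and the temperature equation with $1$, which controls the total energy $\sup_{t}\int(\rho|u|^2+c_v\rho\theta)\,dx$. The crucial step is the first-order estimate for $\nabla u$: testing the momentum equation with the material derivative $\dot u=u_t+u\cdot\nabla u$ produces
\be
\int\rho|\dot u|^2\,dx+\frac{d}{dt}\int\!\Big(\frac{\mu}{2}|\nabla u|^2+\frac{\mu+\lambda}{2}(\text{div}\,u)^2\Big)dx=\frac{d}{dt}\!\int\! P\,\text{div}\,u\,dx+\mathcal{R},
\ee
where $\mathcal{R}$ collects the convective commutators and the pressure-work term $-\int P_t\,\text{div}\,u$. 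Here I would introduce the effective viscous flux $F=(2\mu+\lambda)\text{div}\,u-P$ and the vorticity $\omega=\nabla\times u$; the recast momentum equation $\rho\dot u=\nabla F-\mu\,\nabla\times\omega$ together with elliptic regularity yields $\|\nabla u\|_{L^r}\lesssim\|F\|_{L^r}+\|\omega\|_{L^r}+\|P\|_{L^r}$ and $\|\nabla F\|_{L^2}+\|\nabla\omega\|_{L^2}\lesssim\|\rho\dot u\|_{L^2}$, turning the dissipation $\int\rho|\dot u|^2$ into genuine second-order control of $u$. The structural condition $\lambda<3\mu$ enters precisely to guarantee that the coefficients appearing when one recombines the $F$- and $\omega$-pieces keep the right sign, so that the quadratic velocity terms in $\mathcal{R}$ can be absorbed into $\int\rho|\dot u|^2$.

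In parallel I would run the temperature estimate: testing the temperature equation with $\theta$ gives
\be
\frac{c_v}{2}\frac{d}{dt}\int\rho\theta^2\,dx+\kappa\int|\nabla\theta|^2\,dx=-R\!\int\rho\theta^2\,\text{div}\,u\,dx+\int\theta\big(\tfrac{\mu}{2}|\nabla u+(\nabla u)^{\mathrm{tr}}|^2+\lambda(\text{div}\,u)^2\big)dx,
\ee
whose right-hand side is controlled, after using $\rho\le C_0$, by the critical terms $\int\theta|\nabla u|^2$ and $\int\theta^2|\nabla u|$. These are handled by Gagliardo--Nirenberg interpolation that balances the factor $\|\theta\|_{L^q}$ against the dissipation $\|\nabla\theta\|_{L^2}$ and balances the powers of $\nabla u$ against the second-order bound coming from $\|\rho\dot u\|_{L^2}$; the exponents match exactly because $\frac2p+\frac3q=2$ is the scaling relation. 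The same interpolation applies to the pressure-work terms hidden in $\mathcal{R}$, so that the two estimates above form a single coupled system for the pair $\big(\sup_t\int(|\nabla u|^2+\rho\theta^2),\ \int\!\int(\rho|\dot u|^2+|\nabla\theta|^2)\big)$.

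The main obstacle is closing this coupled system at the scaling-invariant (critical) exponent, where a direct Gronwall argument fails because the dangerous terms carry the full weight of the two dissipation integrals rather than a lower-order factor. The remedy is to localize in time: since $\theta\in L^p(0,T^*;L^q)$, by absolute continuity of the integral one may choose $\tau<T^*$ so that $\|\theta\|_{L^p(\tau,T^*;L^q)}$ is as small as desired; on $[\tau,T^*]$ the critical terms then come with a small prefactor and can be absorbed into $\int\rho|\dot u|^2+\kappa\int|\nabla\theta|^2$, after which Gronwall closes. This yields uniform bounds for $\sup_{t}\int(|\nabla u|^2+\rho\theta^2)$ and the space-time integrals of $\rho|\dot u|^2$ and $|\nabla\theta|^2$ on $[\tau,T^*]$, hence on all of $[0,T^*]$ once combined with the local bounds on $[0,\tau]$. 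With these in hand, the higher-order estimates (for $\nabla\rho$, $\nabla^2u$, $\nabla\theta$ in stronger norms, and the relevant time derivatives) follow by the now-standard bootstrap for \eqref{FNS}, producing exactly the regularity required by Theorem \ref{localwith vacuum} and completing the contradiction.
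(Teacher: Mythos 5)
Your overall architecture (contradiction argument, coupled estimates for $\sup_t\int(|\nabla u|^2+\rho\theta^2)$ and $\iint(\rho|\dot u|^2+|\nabla\theta|^2)$, interpolation calibrated to $\tfrac2p+\tfrac3q=2$, Gronwall) matches the paper's, and your treatment of the $\theta$-critical terms is workable: the paper reaches the same conclusion by Young's inequality producing the time-integrable coefficient $\|\theta\|_{L^q}^{2q/(2q-3)}=\|\theta\|_{L^q}^p$, so your time-localization/smallness device is an admissible variant there. But there is a genuine gap in how you propose to close the velocity part. When you test the momentum equation with $\dot u$ (equivalently $u_t$), the commutator produces $\int\rho\,\dot u\cdot(u\cdot\nabla u)$, and after Cauchy--Schwarz this leaves the cubic term $\int|u|^2|\nabla u|^2$; the same term reappears when the pressure-work integral $-\int P_t\,\mathrm{div}\,u$ is handled via the total energy equation. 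This term carries no factor of $\theta$, so neither your interpolation against $\|\theta\|_{L^q}$ nor your absolute-continuity/smallness trick touches it; and it cannot be ``absorbed into $\int\rho|\dot u|^2$'' as you assert. With only $\sup_t\|\nabla u\|_{L^2}$ and $\iint\rho|\dot u|^2$ available, the best one gets is $\int|u|^2|\nabla u|^2\le\varepsilon\|\nabla u\|_{L^6}^2+C\|\nabla u\|_{L^2}^6$, and the supercritical power $\|\nabla u\|_{L^2}^6$ defeats Gronwall.

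The missing ingredient is the $L^4$-level energy estimate: multiplying the momentum equation by $4|u|^2u$ to obtain
\begin{equation*}
\frac{d}{dt}\int\rho|u|^4+\int_{\{|u|>0\}}|u|^2|\nabla u|^2\le C\int\rho|u|^2|\theta|^2,
\end{equation*}
whose dissipation term supplies exactly the quantity $\int|u|^2|\nabla u|^2$ needed on the left-hand side to absorb the cubic terms above. It is precisely in proving the positivity of the quadratic form arising from $4|u|^2\bigl[\mu|\nabla u|^2+(\mu+\lambda)(\mathrm{div}\,u)^2+2\mu|\nabla|u||^2\bigr]-8(\mu+\lambda)\,\mathrm{div}\,u\,|u|\,u\cdot\nabla|u|$ (via the pointwise identity $|\nabla u|^2=|u|^2|\nabla(u/|u|)|^2+|\nabla|u||^2$ and the case analysis of Wen--Zhu and Li--Xu--Zhu) that the hypothesis $\lambda<3\mu$ is actually used. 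Your attribution of $\lambda<3\mu$ to sign conditions in recombining the effective-viscous-flux and vorticity pieces is not correct --- that decomposition only needs $2\mu+\lambda>0$. Finally, note the paper does not redo the full higher-order bootstrap: once $\sup_t\int(|\nabla u|^2+\rho\theta^2+\rho|u|^4)$ and the dissipation integrals are bounded, it concludes by invoking the known Serrin-type criteria of Huang--Li or Wen--Zhu, which is a cleaner endgame than the one you sketch.
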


\begin{remark}Note that
\eqref{wy1}  can be replaced by
$$\limsup_{t\nearrow T^*}
  \|\text{div\,}u\|_{L^{1}(0,t;L^{\infty})}+\|\theta\|_{L^{p}(0,t;L^{q})}= \infty,$$
   or
  $$\limsup_{t\nearrow T^*}
  \|\nabla{u}\|_{L^{1}(0,t;L^{\infty})}+\|\theta\|_{L^{p}(0,t;L^{q})}= \infty,$$
  which  improves the known blow up criteria \eqref{FJO}.
\end{remark}
\begin{remark}
Theorem \ref{th1.1}  is an extension  of corresponding results in
  \eqref{wzadv}, \eqref{wzsiam}, \eqref{wl}, \eqref{fj} and \eqref{swz1}.
\end{remark}

We give some comments on the proof of   Theorem \ref{th1.1}.
The proof is motivated by the investigation of   regularity of suitable weak solutions to the 3D incompressible Navier-Stokes equations.
Suitable weak solutions  originated in pioneering work by Scheffer \cite{[Scheffer]}
 and in the celebrated paper by Caffarelli,  Kohn and  Nirenberg \cite{[CKN]} obey the local energy inequality.
  Roughly speaking, the energy flux in local energy inequality is  $\int_0^T\int |u|^{3}dxdt$, which can be bounded by (see eg. \cite{[GKT],[HWZ],[WWZ]})
\be\label{energy flux}
\int_0^T\int |u|^{3}dxdt\leq C  \B(\|u\|_{L^{\infty}L^{2}}^{2}+\|\nabla u\|_{L^{2}L^{2}}^{2}\B)\|u\|_{ L^{p}L^{q}}, \, ~~\f{2}{p}+\f{3}{q}=2.
\ee
We would like to mention that the inequality   \eqref{energy flux} plays an important role in the proof of results in  \cite{[GKT],[HWZ],[WWZ]}.

We turn our attentions back to the
 3D  compressible Navier-Stokes equations \eqref{FNS}. Under the hypothesis $\|\rho\|_{L^{\infty}L^{\infty}}$ and $\lambda<3\mu$,  we observe that there holds the following energy estimate to system \eqref{FNS}
$$\ba
& \f{d}{dt}\int\B[\f\mu2|\nabla u|^{2}+(\mu+\lambda)(\text{div\,}u  )^2+\f{1}{2\mu+\lambda}P^2 -2P\text{div\,}u +\f{C_3 C_\nu}{2}\rho\theta^2 +\f{C_4 +1}{2\mu}\rho| u|^4\B]\\
&+ \kappa\int|\nabla \theta|^{2}+\f{1}{2}\int \rho |\dot{u}|^{2}+\int
|u|^2\big|\nabla u\big|^2\leq C  \int \rho |\theta|^{3}  +C \int\rho |u|^{2}|\theta|^{2},
\ea$$
where $\f\mu2|\nabla u|^{2}+(\mu+\lambda)(\text{div\,}u  )^2+\f{1}{2\mu+\lambda}P^2 -2P\text{div\,}u +\f{C_3 C_\nu}{2}\rho\theta^2 +\f{C_4 +1}{2\mu}\rho| u|^4\geq \f\mu2|\nabla u|^{2}+\rho\theta^2+\f{C_4 +1}{2\mu}\rho| u|^4>0$ provided that the positive constant $C_3$ is suitably large. The key point is that the two terms of  right hand side in  the preceding inequality  are parallel to \eqref{energy flux}.  This helps us to prove Theorem \ref{th1.1}.

Without the restriction  $\lambda<3\mu$, we have
\begin{theorem}
\label{the1.2}
Suppose $(\rho,u,\theta)$ is the unique strong solution in Theorem \ref{localwith vacuum}.
If the maximal existence time $T^*$ is finite, then  one of the following results holds, for $p,q$ meeting
  $$\f{2}{p}+\f{3}{q}=2,~q>\f{3}{2},$$
 \begin{enumerate}[(1)]
 \item
 \be\label{nores1}\limsup_{t\nearrow T^*}\B(
  \|\rho\|_{L^{\infty}(0,t;L^{\infty})}+ \|\text{div\,}u \|_{L^{2}(0,t;L^{3})}+\|\theta\|_{L^{p}(0,t;L^{q})}\B)= \infty;\ee
  \item
   \be\label{nores2}\limsup_{t\nearrow T^*}\B( \|\text{div\,}u \|_{L^{1}(0,t;L^{\infty})}+ \|\text{div\,}u \|_{L^{4}(0,t;L^{2})}+\|\theta\|_{L^{p}(0,t;L^{q})}\B)= \infty;\ee
 \item   \be\label{nores3}\limsup_{t\nearrow T^*}\B( \|\text{div\,}u \|_{L^{2}(0,t;L^{\infty})}+\|\theta\|_{L^{p}(0,t;L^{q})}\B)= \infty.\ee
 \end{enumerate}
\end{theorem}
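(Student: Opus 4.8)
The plan is to argue by contradiction. Assuming $T^{*}<\infty$, I will prove three independent continuation principles, one attached to each of the norm packages in \eqref{nores1}, \eqref{nores2} and \eqref{nores3}: in each case, boundedness of the package up to $T^{*}$ forces an a priori bound on $\|\nabla u\|_{L^{\infty}(0,T^{*};L^{2})}$, $\|\sqrt{\rho}\,\dot u\|_{L^{2}(0,T^{*};L^{2})}$ and $\|\nabla\theta\|_{L^{2}(0,T^{*};L^{2})}$, which by the local theory of Theorem \ref{localwith vacuum} contradicts the maximality of $T^{*}$. Since each principle shows that blow-up makes its own package infinite, the disjunction of \eqref{nores1}, \eqref{nores2} and \eqref{nores3} follows a fortiori.

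First I would secure the pointwise density bound. Integrating $\rho_{t}+u\cdot\nabla\rho+\rho\,\text{div}\,u=0$ along characteristics gives $\|\rho(t)\|_{L^{\infty}}\leq\|\rho_{0}\|_{L^{\infty}}\exp\B(\int_{0}^{t}\|\text{div}\,u\|_{L^{\infty}}\,ds\B)$, so the control $\text{div}\,u\in L^{1}(0,T^{*};L^{\infty})$ present in \eqref{nores2} and \eqref{nores3} yields $\rho\in L^{\infty}(0,T^{*};L^{\infty})$ automatically; in the setting of \eqref{nores1} this bound is assumed outright. Hence in all three cases $\|\rho\|_{L^{\infty}L^{\infty}}$ may be treated as known.

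Next I would run the core energy estimate behind the inequality displayed after Theorem \ref{th1.1}, but now \emph{without} invoking $\lambda<3\mu$. The quadratic form $(\mu+\lambda)(\text{div}\,u)^{2}+\tfrac{1}{2\mu+\lambda}P^{2}-2P\,\text{div}\,u$ can still be kept coercive once $\|\rho\|_{L^{\infty}}$ is fixed (absorbing $P^{2}=R^{2}\rho^{2}\theta^{2}$ into a large multiple of $\rho\theta^{2}$), but the cancellation that previously removed the cubic velocity interaction is lost, leaving a scaling-critical remainder schematically of the form $\int|\text{div}\,u|\,|\nabla u|^{2}$. Thus one arrives at
\be
\f{d}{dt}E(t)+\kappa\int|\nabla\theta|^{2}+\f12\int\rho|\dot u|^{2}+\int|u|^{2}|\nabla u|^{2}\leq C\int\rho|\theta|^{3}+C\int\rho|u|^{2}|\theta|^{2}+C\int|\text{div}\,u|\,|\nabla u|^{2},
\ee
with $E(t)\geq \tfrac{\mu}{2}\|\nabla u\|_{L^{2}}^{2}+\|\sqrt{\rho}\,\theta\|_{L^{2}}^{2}>0$. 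The two temperature terms are handled exactly as in Theorem \ref{th1.1}: the energy-flux interpolation parallel to \eqref{energy flux} bounds $\int_{0}^{t}\!\int\rho|\theta|^{3}$ and $\int_{0}^{t}\!\int\rho|u|^{2}|\theta|^{2}$ by a constant times $\B(\sup_{s}E(s)+\int_{0}^{t}\kappa\|\nabla\theta\|_{L^{2}}^{2}+\int_{0}^{t}\int|u|^{2}|\nabla u|^{2}\B)\|\theta\|_{L^{p}(0,t;L^{q})}$ with $\tfrac{2}{p}+\tfrac{3}{q}=2$, so for $t$ near $T^{*}$ these are absorbed by the dissipation and a small fraction of $\sup_{s}E$.

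The genuinely new work, and the step I expect to be the main obstacle, is the scaling-critical divergence remainder, which is precisely where the three alternatives part company. In the subcritical case \eqref{nores3} it is immediate: $\int|\text{div}\,u|\,|\nabla u|^{2}\leq\|\text{div}\,u\|_{L^{\infty}}\|\nabla u\|_{L^{2}}^{2}$, and Gr\"onwall with the weight $\|\text{div}\,u\|_{L^{\infty}}\in L^{1}(0,T^{*})$ closes the estimate. The critical cases are more delicate and require upgrading the dissipation $\int\rho|\dot u|^{2}$ into second-order control through the Lam\'e elliptic estimate $\|\nabla^{2}u\|_{L^{2}}\leq C(\|\rho\dot u\|_{L^{2}}+\|\nabla P\|_{L^{2}})$ (with $\|\nabla P\|_{L^{2}}$ co-controlled by a companion estimate for $\nabla\rho$). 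For \eqref{nores1} I would use H\"older and interpolation,
\be
\int|\text{div}\,u|\,|\nabla u|^{2}\leq\|\text{div}\,u\|_{L^{3}}\|\nabla u\|_{L^{3}}^{2}\leq C\|\text{div}\,u\|_{L^{3}}\|\nabla u\|_{L^{2}}\|\nabla^{2}u\|_{L^{2}}\leq\varepsilon\|\nabla^{2}u\|_{L^{2}}^{2}+C_{\varepsilon}\|\text{div}\,u\|_{L^{3}}^{2}\|\nabla u\|_{L^{2}}^{2},
\ee
so the top-order term is absorbed by $\tfrac12\int\rho|\dot u|^{2}$ and the residual carries the weight $\|\text{div}\,u\|_{L^{3}}^{2}$, which is exactly integrable in time under $\text{div}\,u\in L^{2}(0,T^{*};L^{3})$; for \eqref{nores2} the analogous split $\|\text{div}\,u\|_{L^{2}}\|\nabla u\|_{L^{4}}^{2}\leq\varepsilon\|\nabla^{2}u\|_{L^{2}}^{2}+C_{\varepsilon}\|\text{div}\,u\|_{L^{2}}^{4}\|\nabla u\|_{L^{2}}^{2}$ produces the weight $\|\text{div}\,u\|_{L^{2}}^{4}$, integrable under $\text{div}\,u\in L^{4}(0,T^{*};L^{2})$ (with $\text{div}\,u\in L^{1}(0,T^{*};L^{\infty})$ serving only to bound the density). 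In each case Gr\"onwall then bounds $\sup_{t}E(t)$ and the dissipation, and standard higher-order estimates upgrade these to the norms required by Theorem \ref{localwith vacuum}; making the exponents in these critical interpolations land precisely on an $L^{1}$-in-time weight, so that nothing is lost at the scaling threshold, is the crux of the argument.
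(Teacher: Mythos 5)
Your high-level skeleton (contradiction argument, density bound via characteristics under $\mathrm{div}\,u\in L^{1}L^{\infty}$, a modified energy functional made coercive by a large multiple of $\rho\theta^{2}$, Gr\"onwall with an $L^{1}$-in-time weight, and a three-way case analysis on the divergence) matches the paper, and your treatment of the two temperature terms is essentially the paper's interpolation \eqref{3.42}--\eqref{3.43}. But there is a genuine gap in the core of the argument: you have misidentified the scaling-critical remainder. The restriction $\lambda<3\mu$ is not used in the $H^{1}$-level estimate (the paper's \eqref{key3} holds under \eqref{nares} alone); it is used only in the $\int\rho|u|^{4}$ estimate of Lemma \ref{lema3.2}, to absorb $K_{2}=-8(\mu+\lambda)\int \mathrm{div}\,u\,|u|\,u\cdot\nabla|u|$ into the left-hand side. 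Dropping that restriction and applying Young's inequality leaves the quartic term $\int|u|^{2}|\mathrm{div}\,u|^{2}$ (two undifferentiated factors of $u$), not the cubic term $\int|\mathrm{div}\,u|\,|\nabla u|^{2}$ that you propose to estimate. No derivation in the paper produces only a divergence-weighted $|\nabla u|^{2}$ remainder, and the standard alternative (testing with $\dot u$) generates full $\int|\nabla u|^{3}$-type interactions that cannot be reorganized into $\int|\mathrm{div}\,u|\,|\nabla u|^{2}$ either. Since the $\int|u|^{2}|\nabla u|^{2}$ dissipation you place on the left-hand side can only come from the $\rho|u|^{4}$ estimate, you cannot avoid confronting $\int|u|^{2}|\mathrm{div}\,u|^{2}$.

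This misidentification propagates into the case analysis. For \eqref{nores1} and \eqref{nores2} you absorb the top-order piece via $\|\nabla^{2}u\|_{L^{2}}\leq C(\|\rho\dot u\|_{L^{2}}+\|\nabla P\|_{L^{2}})$, but $\|\nabla P\|_{L^{2}}$ requires control of $\nabla\rho$ (and pointwise bounds on $\theta$), and a $\nabla\rho$ estimate in turn requires $\nabla u\in L^{1}(0,T;L^{\infty})$ --- precisely the kind of information one is trying to establish; this is circular in the vacuum setting. The paper needs no second-order information at all: it bounds $\int|u|^{2}|\mathrm{div}\,u|^{2}\leq C\|\nabla u\|_{L^{2}}^{2}\|\mathrm{div}\,u\|_{L^{3}}^{2}$ by Sobolev embedding, and in Cases 2 and 3 interpolates $\|\mathrm{div}\,u\|_{L^{3}}^{2}\leq\|\mathrm{div}\,u\|_{L^{2}}^{4/3}\|\mathrm{div}\,u\|_{L^{\infty}}^{2/3}$, so that Young's inequality lands the weight exactly in $L^{1}(0,T)$ under the stated hypotheses. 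Finally, your claim that Case \eqref{nores3} is ``immediate'' also fails for the correct remainder: the paper must first derive the a priori bound $\int_{0}^{T}\|\mathrm{div}\,u\|_{L^{2}}^{2}\leq C$ from the basic energy balance together with $\|\rho\theta\|_{L^{\infty}(0,T;L^{1})}\leq C$ (a consequence of \eqref{eqofE}) before Gr\"onwall can close.
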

\begin{remark}
One can replace $\|\rho \|_{L^{\infty}(0,t;L^{\infty})}$ in \eqref{nores1}  by $  \|\text{div\,}u \|_{L^{1}(0,t;L^{\infty})}$,   which improves the known blow up criteria  \eqref{wl}.
\end{remark}

Though \eqref{nores1} and \eqref{nores2}  involve all the quantities in equations \eqref{FNS}, they are in scaling invariant spaces in the sense of \eqref{eqscaling}. We explain the motivation of \eqref{nores1} and \eqref{nores2} below. It is  known that the velocity $u$ (Serrin type), gradient  $\nabla u$ (Beirao da Veiga type), vorticity curl$u$, or pressure $\Pi=\f{\text {divdiv}}{-\Delta}(u_{i}u_{j})$ in scaling invariant norms
guarantee the regularity of the Leray-Hopf weak solutions to the 3D incompressible Navier-Stokes equations   (see eg. \cite{[GKT],[da Veiga95],[CKL07],[Serrin],[BG],[LX],[KL1],[KL2],[Zhou1],[Zhou2]}).
Serrin type criteria for the isentropic compressible fluid were proved by Huang, Li and Xin in \cite{[HLX]}. However, to the knowledge of the authors,
even though for the  isentropic compressible fluid in the presence of vacuum,
the following blow up criteria are unknown
 \be\label{de v}\limsup_{t\nearrow T^*}\B(
  \|\text{div\,}u \|_{L^{1}(0,t;L^{\infty})}+ \|\nabla u \|_{L^{p}(0,t;L^{q})}\B)= \infty,~~{\text with}~~\f{2}{p}+\f{3}{q}=2,~ q>3.\ee
The other case in \eqref{de v} $\f{3}{2}<q<3$ can be derived from the result \cite{[HLX]}   and Sobolev inequality and $q=3$ can be derived by  a slight variant of the proof of \cite{[HLX]}.
Hence, it seems that \eqref{nores1} and \eqref{nores2} without $\theta$ are still new results to the isentropic compressible fluid.
For the general case  \eqref{de v}, we can prove it for the strong solutions of the isentropic compressible Navier-Stokes equations in the case away from the vacuum. Before we state the result, we recall the known blow up criteria for the strong solutions of system \eqref{FNS} without vacuum.

\noindent Fan and Jiang \cite{[FJ]}
\be\label{fj}
\lim\sup\limits_{t\nearrow
T^\star}\left(\|(\rho,\f{1}{\rho},\theta)\|_{L^\infty(0,t;L^\infty)} +\|\rho\|_{L^1(0,t;W^{1,q})}+\|\nabla\rho\|_{L^4(0,t;L^{2})}\right)=\infty,~~~(\lambda< 2\mu)
\ee
Huang and Li \cite{[HL]}
\be\label{huangli}
\lim\sup\limits_{t\nearrow
T^\star}\left(\|\nabla
u\|_{L^1(0,t;L^\infty)}+\|\theta\|_{L^2(0,t;L^\infty)}\right)=\infty.
\ee
Sun, Wang and Zhang \cite{[SWZ1]}
\be\label{swz1}
\lim\sup\limits_{t\nearrow
T^\star}\left( \|(\rho,\f{1}{\rho},\theta)\|_{L^\infty(0,t;L^\infty)}\right)=\infty,~~~(\lambda< 7\mu).
\ee
Then we consider the case away from vacuum and state the second result as follows

\begin{theorem}
\label{th1.3}
Suppose $(\rho,u,\theta)$ is the unique strong solution in Theorem \ref{localwithout vacuum} in section 2.
If the maximal existence time $T^*$ is finite, then either of the following results holds:
 \be\label{genwithout1}\limsup_{t\nearrow T^*}\B(
  \|\rho,\ \rho^{-1} \|_{L^{\infty}(0,t;L^{\infty})}+ \|\text{div\,} u \|_{L^{p_{1}}(0,t;L^{q_{1}})}+ \|\theta\|_{L^{p_{2}}(0,t;L^{q_{2}})}\B)= \infty, \ee
  or \be\label{genwithout2}\limsup_{t\nearrow T^*}\B(
  \|\text{div\,}u \|_{L^{1}(0,t;L^{\infty})}+ \|\text{div\,} u \|_{L^{p_{1}}(0,t;L^{q_{1}})}+ \|\theta\|_{L^{p_{2}}(0,t;L^{q_{2}})}\B)= \infty, \ee
where the pairs $(p_{1},\,q_{1})$ and $(p_{2},\,q_{2})$ meet
 \be
  \f{2}{p_{1}}+\f{3}{q_{1}}=2,~ q_{1}>\f{3}{2};\,~~
  \f{2}{p_{2}}+\f{3}{q_{2}}=2,~q_{2}>\f{3}{2}.
  \ee
\end{theorem}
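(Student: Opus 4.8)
The plan is to argue by contradiction. Suppose $T^*<\infty$ while the quantity in \eqref{genwithout1} stays finite on $[0,T^*)$; I shall build a priori bounds that allow the solution to be continued past $T^*$, contradicting maximality. I first reduce the two alternatives to one. Integrating the continuity equation $\rho_t+u\cdot\nabla\rho+\rho\,\text{div}\,u=0$ along the flow $X$ gives $\rho(X(t),t)=\rho_0(X(0))\exp\!\big(-\int_0^t\text{div}\,u\,ds\big)$, so finiteness of $\|\text{div}\,u\|_{L^1(0,T^*;L^\infty)}$ forces $\rho,\rho^{-1}\in L^\infty(0,T^*;L^\infty)$. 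Hence the hypotheses of \eqref{genwithout2} imply those of \eqref{genwithout1}, and it suffices to rule out \eqref{genwithout1}: I assume $\|\rho,\rho^{-1}\|_{L^\infty L^\infty}$, $\|\text{div}\,u\|_{L^{p_1}L^{q_1}}$ and $\|\theta\|_{L^{p_2}L^{q_2}}$ are all bounded on $[0,T^*)$.

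The heart of the argument is the first-order estimate recorded in the discussion of Theorem \ref{th1.1}. Pairing the momentum equation with the material derivative $\dot u$ and combining with the temperature equation yields a differential inequality whose left-hand side is coercive in $\|\nabla u\|_{L^2}^2$, $\|\sqrt\rho\,\theta\|_{L^2}^2$ and $\|\sqrt\rho\,|u|^2\|_{L^2}^2$ and whose dissipation contains $\kappa\|\nabla\theta\|_{L^2}^2+\tfrac12\|\sqrt\rho\,\dot u\|_{L^2}^2+\int|u|^2|\nabla u|^2$. Because $\rho$ and $\rho^{-1}$ are bounded, the right-hand side is dominated by $\int|\theta|^3+\int|u|^2|\theta|^2$. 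The cubic term is handled exactly as the energy flux \eqref{energy flux}: with $\|\theta\|_{L^\infty L^2}$ available (from $\sqrt\rho\,\theta\in L^\infty L^2$ and $\rho^{-1}\in L^\infty$) and $\|\nabla\theta\|_{L^2 L^2}$ from the dissipation, the interpolation tuned to $\tfrac{2}{p_2}+\tfrac{3}{q_2}=2$ gives $\int_0^t\!\!\int|\theta|^3\le C\big(\|\theta\|_{L^\infty L^2}^2+\|\nabla\theta\|_{L^2 L^2}^2\big)\|\theta\|_{L^{p_2}L^{q_2}}$. The mixed term is treated the same way, viewing $|u|^2$ as a single function bounded in $L^\infty L^1\cap L^2 H^1$ (the $H^1$ part coming from $\int|u|^2|\nabla u|^2$ on the left) and pairing it with $|\theta|^2$ by H\"older and interpolation, the surplus derivatives being absorbed into the dissipation by Young's inequality. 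Gronwall's lemma then yields uniform bounds for $\|\nabla u\|_{L^\infty L^2}$, $\|\nabla\theta\|_{L^2 L^2}$, $\|\sqrt\rho\,\dot u\|_{L^2 L^2}$ and $\|u\|_{L^\infty L^4}$ on $[0,T^*)$.

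To reach the norms demanded by the continuation criterion of Theorem \ref{localwithout vacuum}, I would go through the effective viscous flux $F=(2\mu+\lambda)\text{div}\,u-P$ and the vorticity $\omega=\nabla\times u$, which obey $\Delta F=\text{div}(\rho\dot u)$ and $\mu\Delta\omega=\nabla\times(\rho\dot u)$. Elliptic $L^r$ regularity bounds $F$ and $\omega$ in terms of $\rho\dot u$, and since $\|\nabla u\|_{L^r}\le C\big(\|\text{div}\,u\|_{L^r}+\|\omega\|_{L^r}\big)$ this converts control of $\dot u$ into control of $\nabla u$. Differentiating the momentum equation in time and testing again with $\dot u$ upgrades $\sqrt\rho\,\dot u$ to $L^\infty L^2$ and $\nabla\dot u$ to $L^2 L^2$; here the assumed $\text{div}\,u\in L^{p_1}L^{q_1}$ is used to absorb the compressible commutator terms. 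A logarithmic Sobolev (Beale--Kato--Majda type) inequality then closes a bound for $\|\nabla u\|_{L^1(0,T^*;L^\infty)}$, after which parabolic regularity for the temperature equation gives $\theta\in L^\infty L^\infty$ (or the $\|\theta\|_{L^2 L^\infty}$ needed by \eqref{huangli}) together with the remaining Sobolev norms. The continuation statement of Theorem \ref{localwithout vacuum} then extends the solution past $T^*$, the sought contradiction.

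The main obstacle is this last block of higher-order estimates. The temperature enters both through the pressure $P=R\rho\theta$ and through the viscous heating $\tfrac\mu2|\nabla u+(\nabla u)^{\mathrm{tr}}|^2+\lambda(\text{div}\,u)^2$, so on differentiating the system one must dominate top-order temperature contributions while having only the scaling-invariant norm $\|\theta\|_{L^{p_2}L^{q_2}}$ at hand, not an $L^\infty$ bound. The delicate work is to organise every coupling so that each term exceeding the available regularity is either of energy-flux type (absorbable as in the second step) or swallowed by the dissipation through Young's inequality, and to make the logarithmic estimate close in the $L^1_t$ norm rather than a weaker one. Since no restriction $\lambda<3\mu$ is imposed here, this is precisely where the structure of the Lam\'e operator, through $F$ and $\omega$, must be exploited with care.
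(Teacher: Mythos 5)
Your proposal contains a genuine gap at the very place where the hypotheses of the theorem must do their work. You assert that the combined first--order differential inequality has the full dissipation $\kappa\int|\nabla\theta|^2+\tfrac12\int\rho|\dot u|^2+\int|u|^2|\nabla u|^2$ on the left and only $\int|\theta|^3+\int|u|^2|\theta|^2$ on the right. That inequality is exactly \eqref{3.40}, and it is only available under the extra restriction $\lambda<3\mu$ (it is the content of the delicate case analysis in Lemma \ref{lema3.2}). Theorem \ref{th1.3} imposes no such restriction, so when you differentiate $\int\rho|u|^4$ the cross term $-8(\mu+\lambda)\int\mathrm{div}\,u\,|u|\,u\cdot\nabla|u|$ cannot be absorbed by the good terms, and an extra contribution $C\int|\mathrm{div}\,u|\,|u|^2\,|\nabla|u||$ survives on the right-hand side (this is \eqref{4.2} in the paper). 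The entire point of the hypothesis $\mathrm{div}\,u\in L^{p_1}(0,T;L^{q_1})$ with $\tfrac{2}{p_1}+\tfrac{3}{q_1}=2$ is to control precisely this term: one bounds it by $\|\mathrm{div}\,u\|_{L^{q_1}}\|\rho^{1/2}|u|^2\|_{L^{2q_1/(q_1-2)}}\|\nabla u\|_{L^2}$ (here the lower bound $\rho^{-1}\in L^\infty$ is used to insert $\rho^{1/2}$), interpolates $\|\rho^{1/2}|u|^2\|_{L^{2q_1/(q_1-2)}}$ between $L^2$ and $L^6$, absorbs $\|\nabla|u|^2\|_{L^2}^2$ into the dissipation, and is left with a Gronwall factor $\|\mathrm{div}\,u\|_{L^{q_1}}^{2q_1/(2q_1-3)}=\|\mathrm{div}\,u\|_{L^{q_1}}^{p_1}$, which is time-integrable by assumption (estimates \eqref{4.3}--\eqref{key5}). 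Your write-up never confronts this term; you instead defer the use of the $\mathrm{div}\,u$ hypothesis to ``compressible commutator terms'' in a higher-order step, which is not where it is needed and does not repair the first-order estimate.

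A secondary point: the long final block of higher-order estimates you sketch (effective viscous flux, vorticity, time-differentiated momentum equation, logarithmic Sobolev inequality, parabolic regularity for $\theta$) --- which you yourself identify as the ``main obstacle'' and leave unresolved --- is not needed. Once Lemma \ref{lemma3.3} is established under the hypothesis \eqref{cond5}, one has $\rho|u|^4\in L^\infty(0,T;L^1)$, hence (using $\rho^{-1}\in L^\infty$) $u\in L^\infty(0,T;L^4)\subset L^{8}(0,T;L^4)$ with $\tfrac{2}{8}+\tfrac{3}{4}=1$, and the conclusion follows by invoking the known Serrin-type criterion \eqref{HL2} together with the boundedness of $\|\rho\|_{L^\infty L^\infty}$, exactly as in the proof of Theorem \ref{th1.1}. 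Your reduction of \eqref{genwithout2} to \eqref{genwithout1} via the flow-map representation of $\rho$ is fine and consistent with the paper.
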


\begin{remark}
This theorem is an improvement of corresponding results in
\eqref{huangli} and \eqref{swz1}.
\end{remark}
\begin{remark}
Note that we donot need any additional restriction on the viscosity coefficients $\mu$ and $\lambda$.
\end{remark}

The proof of Theorem \ref{th1.3} is also enlightened by  the study of the 3D  incompressible Navier-Stokes equations. Under the natural  restriction \eqref{nares}, there holds
\be\ba\label{1.27}
\f{d}{dt}\int&\B[\f\mu2|\nabla u|^{2}+(\mu+\lambda)(\text{div\,}u  )^2+\f{1}{2\mu+\lambda}P^2 -2P\text{div\,}u +\f{C_3 C_\nu}{2}\rho\theta^2\\
	& +\f{C_4 +1}{2\mu}\rho| u|^4\B]+ \kappa\int|\nabla \theta|^{2}+\f{1}{2}\int \rho |\dot{u}|^{2}+\int
|u|^2\big|\nabla u\big|^2\\\leq& C  \int \rho^{2}|\theta|^{3}  +C \int\rho |u|^{2}|\theta|^{2}
+C\int|\text{div\,u}| |u|^{2} |\nabla u|. \ea\ee
Our observation  is that the last term in the right hand side of  \eqref{1.27} is similar to the term $\int|\Pi| |u|^{2} |\nabla u|dx$  appearing in the derivation of   regular  criteria via pressure $\Pi$ of the 3D  incompressible Navier-Stokes equations on bounded domain (see \cite{[Zhou1],[KL1],[BG],[KL2]} ).
 This  criterion
 was completely solved by Kang and   Lee \cite{[KL2]} until 2010. In the spirit of \cite{[KL2]}, we can deal with this term to derive the desired estimates.

Theorem \ref{th1.3}
immediately yields the following result.
\begin{coro}
\label{corothe1.4}
Let $(\rho,u)$ be the unique strong solution of the isentropic compressible fluid without initial vacuum.
If the maximal existence time $T^*$ is finite, then there holds
\be\label{genwithout1c}\limsup_{t\nearrow T^*}\B(
  \|\rho,\ {\rho^{-1}} \|_{L^{\infty}(0,t;L^{\infty})}+ \|\text{div\,} u \|_{L^{p }(0,t;L^{q })} \B)= \infty, \ee
  or \be\label{genwithout2c}\limsup_{t\nearrow T^*}\B(
  \|\text{div\,}u \|_{L^{1}(0,t;L^{\infty})}+ \|\text{div\,} u \|_{L^{p }(0,t;L^{q })} \B)= \infty, \ee
where the pair $(p ,\,q )$  meet
$$
  \f{2}{p }+\f{3}{q }=2,~ q >\f{3}{2}.
  $$
\end{coro}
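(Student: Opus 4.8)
The plan is to read Corollary \ref{corothe1.4} as the specialization of Theorem \ref{th1.3} to the isentropic (barotropic) compressible Navier--Stokes system, in which the energy equation for $\theta$ is discarded and the state law $P=R\rho\theta$ is replaced by a pressure depending on the density alone, say $P=P(\rho)$ with $P=A\rho^{\gamma}$ the model case. With this reduction the momentum equation keeps exactly the same form, while every quantity built from $\theta$ disappears. In particular, the energy identity \eqref{1.27} collapses to its $\theta$-free version: the summands $\frac{C_3 C_\nu}{2}\rho\theta^{2}$ and $\kappa\int|\nabla\theta|^{2}$ drop out, the two good terms $\frac{1}{2}\int\rho|\dot{u}|^{2}$ and $\int|u|^{2}|\nabla u|^{2}$ survive, and the right-hand side reduces to the single cubic term $C\int|\text{div\,}u|\,|u|^{2}|\nabla u|$, the contributions $C\int\rho^{2}|\theta|^{3}$ and $C\int\rho|u|^{2}|\theta|^{2}$ being absent. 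Thus the conclusions \eqref{genwithout1} and \eqref{genwithout2} persist after deleting the norm $\|\theta\|_{L^{p_{2}}(0,t;L^{q_{2}})}$ and setting $(p_{1},q_{1})=(p,q)$, which is precisely \eqref{genwithout1c} and \eqref{genwithout2c}.

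First I would record the pressure bookkeeping that takes over the stabilizing rôle of $\theta$. Under the barotropic law a bound on $\|\rho,\ \rho^{-1}\|_{L^{\infty}(0,t;L^{\infty})}$ furnishes uniform bounds on $P=A\rho^{\gamma}$ and on $P'(\rho)=A\gamma\rho^{\gamma-1}$, so the lower-order pressure terms $\frac{1}{2\mu+\lambda}P^{2}-2P\,\text{div\,}u$ in the functional and the forcing $\nabla P=P'(\rho)\nabla\rho$ in the momentum equation are controlled exactly as in the proof of Theorem \ref{th1.3}. For the second alternative, control of $\rho$ is traded for control of $\|\text{div\,}u\|_{L^{1}(0,t;L^{\infty})}$, which through the continuity equation $\rho_{t}+\nabla\cdot(\rho u)=0$ bounds $\rho$ both above and below along particle trajectories and hence again bounds $P$; this is the only place the hypothesis $\|\text{div\,}u\|_{L^{1}(0,t;L^{\infty})}$ enters, and its use is identical to that in Theorem \ref{th1.3}.

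Next I would carry over the heart of the argument unchanged, namely the estimate of the cubic term $\int|\text{div\,}u|\,|u|^{2}|\nabla u|$ in the spirit of Kang and Lee \cite{[KL2]}. This step relies only on the momentum equation together with the bound on $\|\text{div\,}u\|_{L^{p}(0,t;L^{q})}$ under $\frac{2}{p}+\frac{3}{q}=2$, $q>\frac{3}{2}$, and is insensitive to the presence or absence of $\theta$. Feeding the resulting bound into the simplified energy identity and invoking Gr\"onwall's inequality yields a priori control of $\|\nabla u\|_{L^{\infty}(0,t;L^{2})}$, $\int_{0}^{t}\!\int\rho|\dot{u}|^{2}$ and the companion higher-order norms, precisely under the finiteness of the left-hand side of \eqref{genwithout1c} (respectively \eqref{genwithout2c}). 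These are exactly the bounds required by the local existence theorem for the isentropic system to extend the strong solution beyond $T^{*}$, contradicting its maximality.

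The step deserving genuine attention, beyond the routine deletion of the $\theta$-terms, is the coercivity of the reduced energy functional $\frac{\mu}{2}|\nabla u|^{2}+(\mu+\lambda)(\text{div\,}u)^{2}+\frac{1}{2\mu+\lambda}P^{2}-2P\,\text{div\,}u+\frac{C_{4}+1}{2\mu}\rho|u|^{4}$. In the full system its positivity was secured by taking $C_{3}$ large so that $\frac{C_{3}C_\nu}{2}\rho\theta^{2}$ dominated the cross term $-2P\,\text{div\,}u$ once $\rho$ was bounded; with $\theta$ absent this stabilizer is gone, but its rôle is now played directly by the density bound. Indeed, by Young's inequality $(\mu+\lambda)(\text{div\,}u)^{2}-2P\,\text{div\,}u+\frac{1}{2\mu+\lambda}P^{2}\geq -c\,P^{2}$ with $c=\frac{\mu}{(\mu+\lambda)(2\mu+\lambda)}$, and since $P=A\rho^{\gamma}$ is pointwise controlled by $\|\rho\|_{L^{\infty}(0,t;L^{\infty})}$ the remainder $\int P^{2}$ is a bounded quantity; the surviving $\frac{\mu}{2}|\nabla u|^{2}$ then keeps the functional coercive up to this bounded remainder, which is all that the Gr\"onwall argument requires. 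Once this single point is checked, the a priori estimates of Theorem \ref{th1.3} close verbatim and Corollary \ref{corothe1.4} follows at once.
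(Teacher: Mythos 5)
Your proposal is correct and follows the same route as the paper: the paper simply states that Theorem \ref{th1.3} ``immediately yields'' the corollary, i.e.\ one specializes the proof of Theorem \ref{th1.3} to the isentropic system by deleting every $\theta$-term, and your write-up is a fleshed-out version of exactly that specialization. The one point you single out for extra care --- coercivity of the reduced functional without the $\frac{C_3C_\nu}{2}\rho\theta^2$ stabilizer, handled via the bounded remainder $\int P^2$ under the density (or $\|\text{div\,}u\|_{L^1L^\infty}$) hypothesis --- is a genuine gap the paper glosses over, and your Young-inequality argument closes it correctly.
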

\begin{remark}
Although this corollary is valid in the absence of vacuum, it   does not require additional assumptions on $\lambda$ and $\mu$.
A  special case of
\eqref{genwithout2c} is that
 \be \limsup_{t\nearrow T^*}
  \|\text{div\,}u \|_{L^{1}(0,t;L^{\infty})}= \infty.\ee
In the presence  of vacuum, similar blow up criteria in terms of the divergence (gradient)
of the velocity   can be found in \cite{[LX],[WZ13],[SWZ1],[HX]}.
\end{remark}

\begin{remark}
For the isentropic compressible fluid in the absence of vacuum,
combining the results proved by Huang, Li and Xin in \cite{[HLX]}
and Corollary \ref{corothe1.4}, we obtain the following  blow up criteria in terms of the  gradient  of the velocity
\be\label{genwithout1tidu}\limsup_{t\nearrow T^*}\B(
  \|\rho,\ \rho^{-1} \|_{L^{\infty}(0,t;L^{\infty})}+ \|\nabla u \|_{L^{p }(0,t;L^{q })} \B)= \infty, \ee
  or \be\label{genwithout2tidu}\limsup_{t\nearrow T^*}\B(
  \|\text{div\,}u \|_{L^{1}(0,t;L^{\infty})}+ \|\nabla u \|_{L^{p }(0,t;L^{q })} \B)= \infty, \ee
where the pair $(p ,\,q )$  meets
$$
  \f{2}{p }+\f{3}{q }=2,~q >\f{3}{2}.
  $$
  This extends   Beirao da Veiga's result in \cite{[da Veiga95]} from the incompressible Navier-Stokes equations
  to the compressible Navier-Stokes system.
\end{remark}
The remainder of this paper  is structured as follows. In section 2, we first give some notations and recall the local   strong solutions of system \eqref{FNS} due to Cho and   Kim \cite{[CK]}. We establish some auxiliary lemmas under  the hypothesis that the upper bound of the
density is bounded.
Section 3 is devoted to the proof of Theorem \ref{th1.1} and Theorem \ref{the1.2}. Section 4 contains the proof  of Theorem \ref{th1.3}.

 \section{Notations and some auxiliary lemmas} \label{sec2}
  $C$ is an absolute
   constant which may be different from line to line unless otherwise stated.
For $1\leq p\leq\infty$, $L^{p}(\mathbb{R}^{3})$ represents the usual
 Lebesgue  space. The classical Sobolev space  $W^{k,p}(\mathbb{R}^{3})$
is equipped with the norm $\|f\|_{W^{k,p}(\mathbb{R}^{3})}=\sum\limits_{\alpha =0}^{k}\|D^{\alpha}f\|_{L^{p}(\mathbb{R}^{3})}$.
A function $f$ belongs to  the homogeneous Sobolev spaces $D^{k,l}$
if $
u\in L^1_{\rm{loc}}(\mathbb{R}^3): \|\nabla^k u \|_{L^l}<\infty.$

For simplicity,   we write
$$L^p=L^p(\mathbb{R}^3),   \ H^k=W^{k,2}(\mathbb{R}^3), \ D^k=D^{k,2}(\mathbb{R}^3).$$
 We denote the $G$ by the  effective viscous flux, that is, $$G=(2\mu+\lambda)\text{div\,}u-P.$$
The notation $\dot{v}=v_t+u\cdot\nabla v$  stands for material derivative.

It is well-known that
\be \label{ntle}
\ba\|\nabla G\|_{L^{p}}\leq \|\rho \dot{u}\|_{L^{p}}, \ \forall p\in (1,+\infty).
\ea
\ee
We recall
the local well-posedness
  of   strong solutions to the full
compressible Navier-Stokes equations \eqref{FNS} due to
Cho and Kim \cite{[CK]}. The first result allows initial density contains vacuum and some compatibility
conditions are required.
The second one is  absence of vacuum. Moreover, we refer the reader to \cite{[CCK]} the local local existence and uniqueness
of strong solutions for the isentropic
compressible Navier-Stokes system.
 \begin{theorem}\label{localwith vacuum}
Suppose $ u_0, \theta_0 \in D^1(\mathbb{R}^{3})\cap D^2(\mathbb{R}^{3})$ and
\[\rho_0 \in W^{1,q}(\mathbb{R}^{3}) \cap H^1(\mathbb{R}^{3})\cap L^1(\mathbb{R}^{3})\]
for some $q\in (3,6]$.
If $\rho_0$ is nonnegative and the initial data satisfy the compatibility condition
$$\ba
&-\mu\Delta  u_0-(\mu+\lambda)\nabla\text{div}\,u_0 +\nabla P(\rho_0,\theta_0) = \sqrt{\rho_0} g_1\\
&\Delta\theta_0+\frac{\mu}{2}|\nabla  u_0 +(\nabla u_0)^{\text {tr}}|^2
+ \lambda(\text{div}\,u_0)^2 =\sqrt{\rho_0} g_2
\ea$$
for    vector fields  $g_1,g_2\in L^2(\mathbb{R}^{3})$.
Then there exist a time $T\in (0,\infty]$ and unique solution,  satisfying
\be\ba
&(\rho, u,\theta)\in C([0,T );L^1\cap H^1\cap W^{1,q}) \times C([0,T);D^1\cap D^2 )\times L^2([0,T);D^{2,q}) \\
&(\rho_t, u_t,\theta_t)\in C([0,T );L^2\cap L^q)\times L^2([0,T); D^1)\times L^2([0,T);D^1) \\
&(\rho^{1/2} u_t,\rho^{1/2}\theta_t)\in L^\infty([0,T);L^2) \times L^\infty([0,T);L^2).
\ea\ee
\end{theorem}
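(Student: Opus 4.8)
The plan is to construct the solution by a standard linearization–iteration scheme combined with an approximation that removes the vacuum, and then to recover the full regularity through uniform a priori estimates and a compactness argument. First I would regularize the initial density, replacing $\rho_0$ by $\rho_0^\delta=\rho_0+\delta$ with $\delta>0$, so that the approximate densities stay uniformly positive, adjusting the compatibility data $g_1,g_2$ accordingly. Given an approximating velocity $v$ and temperature, I would define the next iterate $(\rho,u,\theta)$ by solving three \emph{decoupled linear problems}: the transport equation $\rho_t+\Div(\rho v)=0$ for the density; the linear Lamé-type parabolic system $\rho(u_t+v\cdot\nabla u)-\mu\Delta u-(\mu+\lambda)\nabla\Div u=-\nabla P$ for the velocity with $\rho,\theta$ frozen; and the linear parabolic equation for $\theta$. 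Each linear problem is solvable in the asserted classes by classical theory (characteristics for the transport equation, Galerkin or semigroup methods for the two parabolic systems).

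The crux is the \textbf{uniform a priori estimate} that renders the iteration map precompact on a short interval $[0,T_*]$ independent of $\delta$ and of the iteration index. Here I would proceed in the following order. (i) Propagate the density bounds: since $\rho$ solves a transport equation, its $L^1\cap H^1\cap W^{1,q}$ norm is controlled along the flow map by $\int_0^t\norm{\nabla u}_{L^\infty}\,ds$, which is in turn dominated by $\norm{u}_{D^2}$ via Sobolev embedding. (ii) Use the effective viscous flux: rewriting the momentum equation as $\rho\dot u=\nabla G-\mu\nabla\times\text{curl}\,u$ with $G=(2\mu+\lambda)\Div u-P$, standard $L^2$ elliptic regularity for the Lamé operator (elliptic because $\mu>0$ and $2\mu+\lambda>0$ follow from \eqref{nares}) gives $\norm{u}_{D^2}\le C(\norm{\rho\dot u}_{L^2}+\norm{\nabla P}_{L^2})$, while the flux estimate \eqref{ntle} controls $\norm{\nabla G}_{L^2}$, thereby \emph{decoupling} the second spatial derivatives of $u$ from its time derivative. (iii) Differentiate the momentum equation in $t$, test with $\dot u$, and integrate by parts to obtain a differential inequality for $\norm{\sqrt{\rho}\,\dot u}_{L^2}^2$ with $\int|\nabla\dot u|^2$ on the left; the analogous computation for the energy equation controls $\norm{\sqrt{\rho}\,\dot\theta}_{L^2}^2$ and $\norm{\nabla\theta}_{L^2}$. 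The \emph{compatibility conditions} enter precisely here: they guarantee that $\sqrt{\rho_0}\,\dot u|_{t=0}$ and $\sqrt{\rho_0}\,\dot\theta|_{t=0}$ lie in $L^2$ (indeed the first condition gives $\sqrt{\rho_0}\,\dot u|_{t=0}=-g_1$), providing finite initial data for these weighted quantities so that a Gronwall argument closes on a small time $T_*$.

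With uniform bounds in hand, I would pass to the limit in the iteration: the differences of consecutive iterates satisfy the same equations with source terms controlled by the uniform bounds, and a lower-order energy estimate shows the sequence is Cauchy in $C([0,T_*];L^2)\cap L^2([0,T_*];D^1)$. The strong limit inherits the full regularity by weak-$\ast$ lower semicontinuity, and one then sends $\delta\to0$ to recover the possibly vanishing density. Uniqueness follows by a parallel energy estimate on the difference of two solutions sharing the same data, using the weighted control of $\sqrt{\rho}(u-\tilde u)_t$. I expect the \textbf{main obstacle} to be exactly the degeneracy caused by vacuum: because $\rho$ may vanish, one never controls $u_t$ in $L^2$ directly but only the weighted quantity $\sqrt{\rho}\,u_t$, so every velocity estimate must be routed through the flux $G$ and the material derivative $\dot u$, and the compatibility conditions are indispensable for initializing the $\sqrt{\rho}\,\dot u$ and $\sqrt{\rho}\,\dot\theta$ bounds. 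Managing the coupling between the temperature source terms (quadratic in $\nabla u$) and the velocity estimates, while keeping $T_*$ independent of $\delta$, is the delicate bookkeeping on which the whole argument hinges.
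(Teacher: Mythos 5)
This theorem is not proved in the paper at all: it is recalled verbatim from Cho and Kim \cite{[CK]} as background for the blow-up criteria, so there is no in-paper argument to compare your proposal against. Your outline --- decoupled linearization and iteration, lifting the vacuum by replacing $\rho_0$ with $\rho_0+\delta$ and adjusting $g_1,g_2$, routing the velocity estimates through the effective viscous flux $G$ and the material derivative $\dot u$ because only $\sqrt{\rho}\,u_t$ (not $u_t$) is controlled, using the compatibility conditions to give $\sqrt{\rho_0}\,\dot u|_{t=0}$ and $\sqrt{\rho_0}\,\dot\theta|_{t=0}$ finite $L^2$ data, closing by Gronwall on a $\delta$-independent interval, and then passing to the limit --- is essentially the strategy of that reference, so the approach is sound, though as written it is a plan rather than a complete proof.
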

 \begin{theorem}\label{localwithout vacuum}
Suppose $ u_0, \theta_0 \in D^1(\mathbb{R}^{3})\cap D^2(\mathbb{R}^{3})$ and
\[\rho_0 \in W^{1,q}(\mathbb{R}^{3}) \cap H^1(\mathbb{R}^{3})\cap L^1(\mathbb{R}^{3})\]
for some $q\in (3,6]$.
If $\rho_0>0$,
then there exist a time $T\in (0,\infty]$ and unique solution,  satisfying
\be\ba
 &\rho\in C([0,T );L^1\cap H^1\cap W^{1,q}), \inf_{(x,t)\in \mathbb{R}^{3}\times[0,T]}\rho>0,   \\
&u\in C([0,T ); D^{1}\cap D^{2} )\cap L^{2}([0,T );  W^{2,q})\\&\theta\in C([0,T );  D^1 \cap D^{2} )\cap L^{2}([0,T );  W^{2,q}).
\ea\ee
\end{theorem}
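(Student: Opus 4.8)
The plan is to construct the solution by a linearization-and-iteration scheme, close uniform a priori estimates on a short time interval, pass to the limit, and establish uniqueness by an energy estimate on the difference of two solutions. The structural feature I would exploit throughout is that, because $\rho_0$ is bounded and bounded away from zero, a density transported by a Lipschitz velocity stays strictly positive and bounded; consequently the viscous momentum operator and the heat operator in \eqref{FNS} are \emph{uniformly} parabolic, and no compatibility condition on the data is needed (in contrast to the vacuum case of Theorem \ref{localwith vacuum}).

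First I would set up the iteration. Taking $u^0=u_0$ and $\theta^0=\theta_0$ extended as constants in time, I define $(\rho^{k+1},u^{k+1},\theta^{k+1})$ from $(\rho^{k},u^{k},\theta^{k})$ by solving three \emph{decoupled linear} problems: the transport equation
\be\label{ps-rho}
\rho^{k+1}_t+\Div(\rho^{k+1}u^{k})=0,\qquad \rho^{k+1}|_{t=0}=\rho_0,
\ee
a uniformly parabolic momentum system for $u^{k+1}$ with coefficient $\rho^{k+1}$ and drift $u^{k}$, and a uniformly parabolic heat equation for $\theta^{k+1}$ carrying the dissipation of $u^{k+1}$ as source. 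Solving \eqref{ps-rho} along the characteristics of $u^{k}$ gives two-sided bounds $\underline{\rho}\le\rho^{k+1}\le\overline{\rho}$ on a time interval controlled by $\|\Div u^{k}\|_{L^1(0,T;L^\infty)}$, which keeps both linear elliptic operators coercive and lets me invoke $L^q$-maximal parabolic regularity for $u^{k+1}$ and $\theta^{k+1}$.

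Next I would close uniform bounds. The goal is a time $T>0$ \emph{independent of} $k$ on which the iterates remain in a fixed ball of the solution space
\[
X=\B\{(\rho,u,\theta):\ \rho,\rho^{-1}\in L^\infty,\ \rho\in C([0,T];H^1\cap W^{1,q}),\ u,\theta\in C([0,T];D^1\cap D^2)\cap L^2(0,T;W^{2,q})\B\}.
\]
For the density I would differentiate \eqref{ps-rho}, test in $L^2$ and $L^q$, and control $\|\rho^{k+1}\|_{H^1\cap W^{1,q}}$ by Gronwall against $\|u^{k}\|_{W^{2,q}}$. This is where the hypothesis $q>3$ is essential: in three dimensions $W^{1,q}\hookrightarrow L^\infty$ and $W^{2,q}\hookrightarrow W^{1,\infty}$, which supply the Lipschitz control of the transport velocity that the transport estimate requires. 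For velocity and temperature I would run energy estimates on the momentum and energy equations---testing with $\dot u^{k+1}$ and with $\Delta\theta^{k+1}$---and combine them with the flux estimate \eqref{ntle} and elliptic regularity to upgrade to $L^2(0,T;W^{2,q})$; taking $T$ small absorbs the nonlinear right-hand sides.

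Finally I would prove convergence and uniqueness. Subtracting consecutive iterates and repeating the energy estimates in the lower-order norms $C([0,T];L^2)$ for the density and $C([0,T];D^1)$ for $(u,\theta)$ produces a contraction once $T$ is small, so $(\rho^{k},u^{k},\theta^{k})$ converges to a limit that, by the uniform bounds of the previous step together with weak lower-semicontinuity, lies in $X$ and solves \eqref{FNS}; the same difference estimate applied to two solutions yields uniqueness. \textbf{The main obstacle} is the mismatch in smoothing between the hyperbolic density equation and the parabolic $(u,\theta)$ equations: $\rho$ gains no regularity from the flow, so its top-order norm must be closed using only the Lipschitz regularity of $u$, and it is precisely the borderline Sobolev embeddings available for $q>3$ that let the coupled estimate close on a time interval uniform in $k$.
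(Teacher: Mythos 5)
The paper does not prove this statement: Theorem \ref{localwithout vacuum} is simply recalled from Cho and Kim \cite{[CK]} as a known local well-posedness result (the text introducing it says as much), so there is no internal proof to compare yours against. Your outline is the standard route by which such theorems are established in the compressible literature --- decouple and linearize, solve the continuity equation along the characteristics of the previous velocity iterate, exploit uniform parabolicity of the Lam\'e and heat operators away from vacuum, close a priori bounds on a short time interval using the $q>3$ embeddings $W^{1,q}\hookrightarrow L^\infty$ and $W^{2,q}\hookrightarrow W^{1,\infty}$, and contract in lower-order norms --- and it matches in spirit the arguments of \cite{[CK]} and \cite{[CCK]}. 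As a strategy it is sound.

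Two substantive caveats. First, your argument launches from the premise that $\rho_0$ is bounded away from zero, but the stated hypothesis is only $\rho_0>0$ pointwise together with $\rho_0\in L^1(\mathbb{R}^3)$; on the whole space these are incompatible with $\inf\rho_0>0$ (a function with a positive infimum on $\mathbb{R}^3$ is not integrable), so the two-sided bounds $\underline{\rho}\le\rho^{k+1}\le\overline{\rho}$ that make your elliptic operators uniformly coercive are not actually available from the hypotheses as written. This is at root a defect in the theorem's transcription (the conclusion $\inf_{(x,t)}\rho>0$ suffers from the same inconsistency), but your proof only functions under the intended reading $\rho_0\ge\underline{\rho}>0$, and that assumption should be made explicit. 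Second, the proposal is a plan rather than a proof: the genuinely delicate steps --- maximal $L^q$ parabolic regularity for the Lam\'e system with a coefficient $\rho^{k+1}$ that is merely $W^{1,q}$ in space, the closing of the $L^2(0,T;W^{2,q})$ bound for $\theta^{k+1}$ whose source term is quadratic in $\nabla u^{k+1}$, and the verification that the contraction in the weak norms is compatible with the uniform bounds in the strong ones --- are asserted rather than carried out, and they are where the actual work of \cite{[CK]} lies.
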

Next, under the hypothesis that the upper bound of the
density is bounded, namely,
\be
  \|\rho\|_{L^{\infty}(0,T;L^{\infty})} \leq M, \label{cond1part}\ee
  we derive some useful estimates, which plays an important role in the proof of all our theorems.

\begin{lemma} Suppose that \eqref{cond1part} is valid, then there holds
\begin{equation}\label{key3}
\begin{aligned}
\f{d}{dt}\int&\B[\f\mu2|\nabla u|^{2}+(\mu+\lambda)(\text{div\,}u  )^2+\f{1}{2\mu+\lambda}P^2 -2P\text{div\,}u +\f{C_3 C_\nu}{2}\rho\theta^2 \B]
\\&\ \ \ \ \ \ + \kappa\int|\nabla \theta|^{2}+\f{1}{2}\int \rho |\dot{u}|^{2}\\
&\leq C_{4} \int \rho|\theta|^{3}  +C_{4}\int\rho |u|^{2}|\theta|^{2} +C_{4} \int |u|^{2}|\nabla u|^{2}  .
\end{aligned}
\end{equation}
\end{lemma}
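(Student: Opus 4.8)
The plan is to obtain \eqref{key3} as the sum of two coupled a priori estimates — a \emph{material--derivative} estimate for the velocity and a $\theta$--weighted estimate for the temperature — glued together by a large parameter $C_3$, exploiting throughout the effective viscous flux $G=(2\mu+\lambda)\text{div\,}u-P$, the elliptic bound $\|\nabla G\|_{L^2}\le\|\rho\dot u\|_{L^2}$ from \eqref{ntle}, and the density bound \eqref{cond1part}.

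\textbf{Velocity estimate.} First I would test the momentum equation, written as $\rho\dot u=\mu\Delta u+(\mu+\lambda)\nabla\text{div\,}u-\nabla P$, against the material derivative $\dot u=u_t+u\cdot\nabla u$. Writing $\int\rho|\dot u|^2=\int(\mu\Delta u+(\mu+\lambda)\nabla\text{div\,}u-\nabla P)\cdot u_t+\int\rho\dot u\cdot(u\cdot\nabla u)$ and integrating by parts in the $u_t$--part turns the viscous and pressure contributions into $-\f{d}{dt}\int\big[\f\mu2|\nabla u|^2+\f{\mu+\lambda}2(\text{div\,}u)^2-P\text{div\,}u\big]$ together with a leftover $-\int P_t\,\text{div\,}u$. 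The convective part $\int\rho\dot u\cdot(u\cdot\nabla u)$ I would bound by Young's inequality as $\le\f12\int\rho|\dot u|^2+\f{M}2\int|u|^2|\nabla u|^2$ (using $\rho\le M$); this simultaneously absorbs half of the kinetic dissipation, yielding the $\f12\int\rho|\dot u|^2$ on the left of \eqref{key3}, and produces the term $\int|u|^2|\nabla u|^2$ on the right.

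\textbf{Pressure coupling.} To handle $-\int P_t\,\text{div\,}u$ I would substitute $\text{div\,}u=(G+P)/(2\mu+\lambda)$: the $P$--part contributes a time derivative $\tfrac{d}{dt}\int P^2$ (up to a constant) that moves to the left, and recombining it with the viscous terms produces exactly the quadratic block of the functional, since $(\mu+\lambda)(\text{div\,}u)^2+\f1{2\mu+\lambda}P^2-2P\text{div\,}u=\f1{2\mu+\lambda}G^2-\mu(\text{div\,}u)^2$. The $G$--part leaves a residual $-\f1{2\mu+\lambda}\int P_t\,G$, which I would expand via the pressure evolution $P_t+u\cdot\nabla P+(1+\f{R}{c_v})P\,\text{div\,}u=\f{R}{c_v}\big(\kappa\Delta\theta+Q\big)$, with $Q=\f\mu2|\nabla u+(\nabla u)^{\text{tr}}|^2+\lambda(\text{div\,}u)^2$, derived from the continuity and temperature equations. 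The heat piece $\propto\int\nabla\theta\cdot\nabla G$ is absorbed into $\kappa\int|\nabla\theta|^2$ and $\f12\int\rho|\dot u|^2$ through $\|\nabla G\|_{L^2}\le\|\rho\dot u\|_{L^2}$, while the transport piece $\int(u\cdot\nabla P)G$, after integration by parts and use of $P=R\rho\theta$, $\rho\le M$ and the same elliptic bound, yields precisely $\int\rho|u|^2|\theta|^2$ after one more application of Young's inequality.

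\textbf{Temperature estimate and combination.} Next I would test the temperature equation against $\theta$. Since $c_v\int\rho\dot\theta\,\theta=\f{c_v}2\f{d}{dt}\int\rho\theta^2$ by the continuity equation, this gives $\f{c_v}2\f{d}{dt}\int\rho\theta^2+\kappa\int|\nabla\theta|^2=-\int P\theta\,\text{div\,}u+\int Q\theta$; writing $P=R\rho\theta$ and $\text{div\,}u=(G+P)/(2\mu+\lambda)$, the term $\int P\theta\,\text{div\,}u$ furnishes $\int\rho|\theta|^3$ via $\rho\le M$, plus lower-order pieces absorbable in the dissipation. Multiplying this identity by a large $C_3$ (so that $C_\nu=c_v$) and adding it to the two preceding steps produces the functional of \eqref{key3} with weight $\f{C_3C_\nu}2\rho\theta^2$; taking $C_3$ large lets the dissipation $C_3\kappa\int|\nabla\theta|^2$ dominate the temperature--velocity couplings (in particular $C_3\int Q\theta\sim C_3\int\theta|\nabla u|^2$), so that after all absorptions at least $\kappa\int|\nabla\theta|^2$ survives on the left.

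\textbf{Main obstacle.} The delicate point is the residual $-\f1{2\mu+\lambda}\int P_t\,G$: through the pressure equation it injects the second-order quantity $\Delta\theta$ and the quadratic dissipation $Q\sim|\nabla u|^2$ into an estimate whose only available dissipation is $\kappa\int|\nabla\theta|^2$ and $\int\rho|\dot u|^2$. Closing the argument hinges on the elliptic gain $\|\nabla G\|_{L^2}\le\|\rho\dot u\|_{L^2}$, which trades $\Delta\theta$ against $\int|\nabla\theta|^2$, and on the structural fact that $P$ always carries a factor $\rho\le M$, so that the genuinely non-absorbable remainders collapse to exactly the three scaling-compatible quantities $\int\rho|\theta|^3$, $\int\rho|u|^2|\theta|^2$ and $\int|u|^2|\nabla u|^2$. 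Balancing the temperature--velocity cross terms through the single free parameter $C_3$ is precisely what forces $C_3$, and hence $C_4$, to be large.
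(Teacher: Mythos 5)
Your overall architecture (momentum equation tested with the time/material derivative, temperature equation tested with $\theta$, the $G$-decomposition of $-\int P_t\,\text{div\,}u$, gluing with a large $C_3$) matches the paper's, but two of your steps, as described, would fail.

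First, the term $\int Q\,\theta\sim\int\theta|\nabla u|^2$ produced by testing the temperature equation with $\theta$ is \emph{not} dominated by $\kappa\int|\nabla\theta|^2$, and multiplying the whole temperature inequality by a large $C_3$ scales this bad term by the same factor, so the absorption you invoke cannot work. The paper controls it by an additional device you omit: multiplying the momentum equation by $u\theta$ and integrating by parts, which converts $\mu\int|\nabla u|^2\theta$ into $\eta\int\rho|\dot u|^2+C\int\rho|u|^2\theta^2+C\int\rho^2|\theta|^3+\varepsilon\int|\nabla\theta|^2+C\int|u|^2|\nabla u|^2$; this is precisely where the admissible right-hand side of \eqref{key3} comes from.

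Second, and more structurally, your treatment of the residual $-\f{1}{2\mu+\lambda}\int P_tG$ via the pressure evolution equation $P_t+u\cdot\nabla P+(1+\f{R}{c_v})P\,\text{div\,}u=\f{R}{c_v}(\kappa\Delta\theta+Q)$ injects the term $\f{R}{c_v(2\mu+\lambda)}\int Q\,G$, whose leading part is $(2\mu+\lambda)\int Q\,\text{div\,}u\sim\int|\nabla u|^2|\text{div\,}u|$. This cubic gradient quantity is not controlled by any of $\int\rho|\dot u|^2$, $\int|\nabla\theta|^2$, $\int|u|^2|\nabla u|^2$ or the temperature terms, and you do not address it. The paper avoids it entirely by writing $P=\rho E-\f{\rho|u|^2}{2}$ and using the \emph{conservative} total energy equation \eqref{eqofE}: there the viscous dissipation $Q$ sits inside a divergence $\text{div\,}\{[\lambda\text{div\,}u\,Id+\mu(\nabla u+(\nabla u)^{\text{tr}})]\cdot u\}$, so after integration by parts against $G$ one only meets $\int|u||\nabla u||\nabla G|\le\eta\|\nabla G\|_{L^2}^2+C\int|u|^2|\nabla u|^2$, which is admissible by \eqref{ntle}; moreover the dangerous transport term $\int\rho\f{|u|^2}{2}u\cdot\nabla G$ cancels exactly between the $(\rho E)_t$ and $\big(\f{\rho|u|^2}{2}\big)_t$ contributions. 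Replacing your pressure-equation step by this energy-equation step, and adding the $u\theta$ multiplier, is what actually closes the lemma.
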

\begin{proof}
Taking the $L^{2}$ inner product of the temperature equation with $\theta$, by the Cauchy inequality, we infer that
\be\ba\label{3.2}
\f{C_\nu}{2}\f{d}{dt}\int\rho\theta^{2}+\kappa\int|\nabla \theta|^{2}&\leq R\int|\rho\theta^{2}\text{div}u|+(2\mu+\lambda)\int|\nabla u|^{2}\theta \\
&\leq C\int \rho^{2}|\theta|^{3} +C\int|\nabla u|^{2}\theta .
\ea\ee
Multiplying the both sides of the momentum equation by $u\theta$ and using the
integration by parts, we get
\be\ba\label{3.3}
\mu\int|\nabla u|^{2}\theta &\leq\int|\rho \dot{u}u\theta|+|\int\nabla P u\theta dx|+C\int |u||\nabla u||\nabla \theta|\\&=I+II+III.
\ea\ee
Thanks to the Cauchy-Schwarz inequality, we find that
\be\label{3.4}
I\leq  \eta\int\rho |\dot{u}|^{2}+C(\eta)\int \rho |u|^{2}|\theta|^{2}.
\ee
According to integration by parts and Young's inequality, we conclude
\be\ba\label{3.5}
II&=|\int P \text{div}u\theta dx+\int P  u\nabla\theta dx|\\
&=|R\int \rho \text{div}u\theta^{2} +R\int \rho \theta u\nabla\theta  |\\
&\leq  C\int \rho^{2}|\theta|^{3} +\f{\mu}{8}\int|\nabla u|^{2}\theta + \varepsilon_1 \int|\nabla\theta|^{2} +C\int \rho^{2}\theta^{2}|u|^{2}  .
\ea\ee
The Cauchy-Schwarz inequality yields that
\be \label{3.6}
III\leq \varepsilon_1\int  |\nabla \theta|^{2}+C\int |u|^{2}|\nabla u|^{2}  .
\ee
Plugging \eqref{3.4}-\eqref{3.6} into  \eqref{3.3}, we have
\be\ba\label{3.7}\f{7\mu}{8}\int|\nabla u|^{2}\theta \leq&  \eta\int|\rho \dot{u}|^{2}+C\int\rho |u|^{2}|\theta|^{2}+C\int \rho^{2}|\theta|^{3}  +C\int \rho^{2}\theta^{2}|u|^{2}  \\&+2\varepsilon_1\int  |\nabla \theta|^{2}+C\int |u|^{2}|\nabla u|^{2}  .
\ea
\ee
It follows from \eqref{3.2} and \eqref{3.7} that
\be\ba\label{k1}
&\f{C_\nu}{2}\f{d}{dt}\int\rho\theta^{2}+ \kappa \int|\nabla \theta|^{2}\\&\leq C_{1}\int \rho^{2}|\theta|^{3} + C_{1} \eta\int|\rho \dot{u}|^{2}+C_{1}\int\rho |u|^{2}|\theta|^{2} +C_{1} \int |u|^{2}|\nabla u|^{2}  .
\ea
\ee
Taking the $L^{2}$ inner product with $u_{t}$ in the second equation of \eqref{FNS}, we get
\be\ba\label{3.10}
&\f{1}{2}\f{d}{dt}\int\B[\mu|\nabla u|^{2}+(\lambda+\mu)(\text{div\,}u)^{2}\B]+\int \rho |\dot{u}|^{2}\\=&\int \rho \dot{u}(u\cdot\nabla u)+\int P\text{div\,}u_{t}dx\\=&J_1+J_2.
\ea
 \ee
The Young inequality ensures that
 \be\ba\label{3.11}
J_1\leq\f{1}{4}\int \rho |\dot{u}|^{2}+C\int |u|^{2}|\nabla u|^{2}  .
\ea
 \ee
 After a few calculations, by the effective viscous flux $G=(2\mu+\lambda)\text{div\,}u-P$,  we arrive at
 \be\ba\label{3.12}
J_2&=\f{d}{dt}\int P\text{div\,}u-\int P_{t}\text{div\,}u\\&=\frac{d}{dt}\int P\text{div\,}u-\frac{1}{2(2\mu+\lambda)}
\frac{d}{dt}\int P^2
-\frac{1}{2\mu+\lambda}\int P_tG
\\&=J_{21}+J_{22}+J_{23}.
\ea
 \ee
 Notice that the equation of $\rho E=P+\frac{\rho|u|^2}{2}$  is governed by
 \be\label{eqofE}
 (\rho E)_t+\text{div\,}(\rho E u+P
u)-\kappa\Delta \theta =\text{div\,}\B\{\big[\lambda\text{div\,}u
Id+\mu (\nabla u+(\nabla u)^{\text{tr}})\big]\cdot u\B\}.
\ee
By virtue of \eqref{eqofE},
we see that
\be  \ba\label{3.13}
J_{23}=&-\frac{1}{2\mu+\lambda}\int\ (\rho E)_t G+
\frac{1}{2\mu+\lambda}\int\ \left(\frac{\rho|u|^2}{2}\right)_t G\\=&-\frac{R}{2\mu+\lambda}\int\ \rho \theta u\cdot\nabla G-\frac{1}{2\mu+\lambda}\int\ \rho\frac{|u|^2}{2}u\cdot\nabla G\\&+\frac{1}{2\mu+\lambda}\int\ \B\{\big[\lambda\text{div\,}u
Id+\mu (\nabla u+(\nabla u)^\prime)\big]\cdot u\B\}\nabla G+\frac{\kappa}{2\mu+\lambda}\int\ \nabla\theta\cdot\nabla G.
\\&+
\frac{1}{2\mu+\lambda}\int\ \left(\frac{\rho|u|^2}{2}\right)_t G. \ea
\ee
With the help of the Young inequality,  \eqref{ntle} and \eqref{cond1part}, we get
$$
-\frac{1}{2\mu+\lambda}\int\ \rho \theta u\cdot\nabla G\leq \f{\eta}{4}\|\nabla G\|^{2}_{L^2}+C\int \rho^{2} |u|^{2}|\theta|^{2}\leq \varepsilon \|\rho \dot{u}\|^{2}_{L^2} +C\int \rho  |u|^{2}|\theta|^{2}
$$
Likewise, there hold
$$\ba
\frac{1}{2\mu+\lambda}\int\ \B[\lambda\text{div\,}u
Id+\mu (\nabla u+(\nabla u)^\prime)\B]
u\nabla G &\leq \f{\eta}{4}\|\nabla G\|^{2}_{L^2}+C\int |u|^{2}|\nabla u|^{2}  \\
& \leq \varepsilon\|\rho \dot{u}\|^{2}_{L^2}+C\int |u|^{2}|\nabla u|^{2}  , \\
\ea$$
$$\ba
&\frac{1}{2\mu+\lambda}\int\ \nabla\theta\cdot\nabla G \leq \f{\eta}{4}\|\nabla G\|^{2}_{L^2}+C \int|\nabla \theta|^{2}dx\leq \varepsilon\|\rho \dot{u}\|^{2}_{L^2}+C \int|\nabla \theta|^{2}dx.
\ea$$
Putting together with the above estimates, we have
\be \ba\label{3.14}
-\frac{1}{2\mu+\lambda}\int\ (\rho E)_t G
\leq & -\frac{1}{2\mu+\lambda}\int\ \rho\frac{|u|^2}{2}u\cdot\nabla G+ \varepsilon \int \rho |\dot{u}|^{2}dx+C\int \rho  |u|^{2}|\theta|^{2}dx\\&
+C\int |u|^{2}|\nabla u|^{2}+C \int|\nabla \theta|^{2}.
\ea\ee
We turn our attentions to the last term of \eqref{3.13}. A straightforward calculation gives
\be \ba\label{3.15}
\frac{1}{2\mu+\lambda}\int\ \left(\frac{\rho|u|^2}{2}\right)_t G=&\frac{1}{2\mu+\lambda}\int\ \frac{\rho_t|u|^2}{2} G+
\frac{1}{2\mu+\lambda}\int\ \rho u\cdot u_t G.
\ea
\ee
Taking the advantage of $ \rho_t=-\text{div\,}(\rho u)$, the integration by parts, the Young inequality and \eqref{cond1}, we get
\be\ba\label{3.16}
\frac{1}{2\mu+\lambda}&\int\ \frac{\rho_t|u|^2}{2} G=-\frac{1}{2\mu+\lambda}\int\ \frac{\text{div\,}(\rho u)|u|^2}{2} G\\=&\frac{1}{2\mu+\lambda}\int\ \rho u\cdot\nabla u\cdot u G+\frac{1}{2\mu+\lambda}\int\ \frac{\rho u|u|^2}{2}\cdot\nabla G\\
\leq& C\int\ \rho |u\cdot\nabla u|^{2}+C\int\ \rho |u|^{2}| G|^{2}+\frac{1}{2\mu+\lambda}\int\ \frac{\rho u|u|^2}{2}\cdot\nabla G\\
\leq& C\int\ \rho |u\cdot\nabla u|^{2}+C\int\ \rho |u|^{2}(|\nabla u|^{2}+R\rho^{2}\theta^{2})+\frac{1}{2\mu+\lambda}\int\ \frac{\rho u|u|^2}{2}\cdot\nabla G\\
\leq& C\int\   |u|^{2} |\nabla u|^{2}+C\int\ \rho |u|^{2} \theta^{2} +\frac{1}{2\mu+\lambda}\int\ \frac{\rho u|u|^2}{2}\cdot\nabla G,
\ea\ee
where we have used the fact
\be\label{nxtlc}|G|\leq C(|\nabla u|+|\rho\theta|).\ee
From $\dot{u}=u_t+u\cdot\nabla u$, the Young inequality, \eqref{nxtlc} and \eqref{cond1}, we find
\be\ba\label{3.18}
\frac{1}{2\mu+\lambda}\int\ \rho u\cdot u_t G
&=\frac{1}{2\mu+\lambda}\int\ \rho u\cdot (\dot{u}-u\cdot\nabla u) G\\&=\frac{1}{2\mu+\lambda}\int\ \rho u\cdot \dot{u}G-\int\ \rho u u\cdot\nabla u  G\\
&\leq \varepsilon\int \rho |\dot{u}|^{2}dx+C\int \rho |  u |^{2}|G |^{2}dx +\int\ \rho |u|^2 |\nabla u|^2
\\
&\leq \varepsilon\int \rho |\dot{u}|^{2}dx+C\int\   |u|^{2} |\nabla u|^{2}+C\int\ \rho |u|^{2} \theta^{2}.
 \ea\ee
Inserting  \eqref{3.16} and \eqref{3.18} into \eqref{3.15}, we obtain
 \be  \ba\label{3.19}
 \frac{1}{2\mu+\lambda}\int\ \left(\frac{\rho|u|^2}{2}\right)_t G\leq& \varepsilon\int \rho |\dot{u}|^{2}+C\int\   |u|^{2} |\nabla u|^{2}\\&+C\int\ \rho |u|^{2} \theta^{2}+ \frac{1}{2\mu+\lambda}\int\ \frac{\rho u|u|^2}{2}\cdot\nabla G.
\ea\ee
We derive from \eqref{3.13}, \eqref{3.14} and \eqref{3.19} that
 \be  \ba\label{3.20}
&J_{23}
\leq &\varepsilon\int \rho |\dot{u}|^{2}+C\int \rho  |u|^{2}|\theta|^{2} +C\int |u|^{2}|\nabla u|^{2} +C \int|\nabla \theta|^{2}.
\ea\ee
It follows from \eqref{3.10}, \eqref{3.11}, \eqref{3.12} and \eqref{3.20} that
\be\ba\label{k2}
&  \f{1}{2} \f{d}{dt}\int\B[\mu|\nabla u|^{2}+(\lambda+\mu)(\text{div\,}u)^{2}+ \frac{1}{ (2\mu+\lambda)} P^2-2P\text{div\,}u\B]+ \f{1}{2} \int \rho |\dot{u}|^{2}\\
&\leq C_{2}\int \rho  |u|^{2}|\theta|^{2} +C_{2}\int |u|^{2}|\nabla u|^{2} +C_{2} \kappa\int|\nabla \theta|^{2}.
\ea\ee
Since $P=R\rho\theta$,
we can choose $C_{3}\geq C_{2}+1$ and $C_{3}$ sufficiently large to make sure that
$$
\f{\mu}{2}|\nabla u|^{2}+(\mu+\lambda)(\text{div\,}u)^2+\f{1}{2\mu+\lambda}P^2-2P\text{div\,}u+\f{C_3C_\nu}{2}\rho\theta^2\geq
\f\mu2|\nabla u|^{2}+\rho\theta^{2}.
$$
Multiplying\eqref{k1} both sides by $C_{3}$ and adding it with  \eqref{k2}, we end up with
\be\ba
& \f{d}{dt}\int\B[\f\mu2|\nabla u|^{2}+(\mu+\lambda)(\text{div\,}u  )^2+\f{1}{2\mu+\lambda}P^2 -2P\text{div\,}u +\f{C_3 C_\nu}{2}\rho\theta^2 \B]\\
&\ \ +C_{3}\kappa\int|\nabla \theta|^{2}+\int \rho |\dot{u}|^{2}\\&\leq C_{1}C_{3}\int \rho^{2}|\theta|^{3} +C_{1}C_{3}  \varepsilon\int|\rho \dot{u}|^{2}+C_{1}C_{3}\int\rho |u|^{2}|\theta|^{2} +C_{1}C_{3} \int |u|^{2}|\nabla u|^{2}  \\&\ \ + C_{2}\int \rho  |u|^{2}|\theta|^{2} +C_{2}\int |u|^{2}|\nabla u|^{2} +C_{2} \kappa\int|\nabla \theta|^{2}\\
&\leq C\int \rho^2\theta^3+C_1C_3\varepsilon\int \rho\dot{u}^2+C\int \rho |u|^2|\theta|^2+C\int |u|^2|\nabla{u}|^2+C_2\kappa\int |\nabla{\theta}|^2.
\ea\ee
Choosing $\varepsilon $ sufficiently small to obtain \eqref{key3}.
This completes the proof of this lemma.
\end{proof}

\section{Blow up criteria with vacuum}

\subsection{Extra constraint on the coefficients of viscosity}

In what follows, we assume that $(\rho, u, \theta)$ is a strong solution of (1) in $ [0, T )\times \mathbb{R}^{3}$
with the regularity stated in Theorem \ref{localwith vacuum}. We will prove Theorem \ref{th1.1} by a contradiction
argument. Therefore, we   assume that
\be
  \|\rho\|_{L^{\infty}(0,T;L^{\infty})}+\|\theta\|_{L^{p}(0,T;L^{q})}\leq C,\ \  \f{2}{p}+\f{3}{q}=2,\ \text{where}\ q> \f{3}{2} .\label{cond1}\ee
First, we follow the arguments of Wen and Zhu \cite{[WZ13]} and
Li, Xu and Zhu \cite{[LXZ]} to prove the lemma below.

\begin{lemma}\label{lema3.2} Suppose that \eqref{cond1} is valid and $\lambda<3\mu$, then  there holds
\be \label{3.23}
\frac{d}{dt}\int\ \rho|u|^4+ \int_{{\mathbb{R}^3}\cap\{|u|>0\}}
|u|^2\big|\nabla u\big|^2\leq C_{5}\int\rho |u|^{2}|\theta|^{2} .\ee
\end{lemma}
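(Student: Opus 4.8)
The plan is to test the momentum equation against $|u|^{2}u$ and to turn the left-hand side into the evolution of $\int\rho|u|^{4}$ by means of the continuity equation. Writing the momentum equation as $\rho\dot u=\mu\Delta u+(\mu+\lambda)\nabla\text{div\,}u-\nabla P$ and using the mass equation in the form $\frac{d}{dt}\int\rho\,g(u)=\int\rho\big(g(u)_{t}+u\cdot\nabla g(u)\big)$ (which follows from $\rho_{t}=-\text{div\,}(\rho u)$ and the decay \eqref{non-boundary}), I first record the identity $\frac{d}{dt}\int\rho|u|^{4}=4\int\rho\dot u\cdot|u|^{2}u$. Substituting the momentum equation gives $\frac14\frac{d}{dt}\int\rho|u|^{4}=\int\big(\mu\Delta u+(\mu+\lambda)\nabla\text{div\,}u-\nabla P\big)\cdot|u|^{2}u$, and it remains to integrate the right-hand side by parts; all such manipulations are legitimate under the regularity of Theorem \ref{localwith vacuum}.

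Second, I would perform the integrations by parts term by term. Abbreviating $\phi=|u|^{2}$ and using $\nabla\phi=2(\nabla u)^{\mathrm{tr}}u$, the Laplacian term produces $-\frac{\mu}{2}\int|\nabla\phi|^{2}-\mu\int\phi|\nabla u|^{2}$, the $\nabla\text{div\,}u$ term produces $-(\mu+\lambda)\int\text{div\,}u\,(u\cdot\nabla\phi)-(\mu+\lambda)\int\phi(\text{div\,}u)^{2}$, and the pressure term produces $\int P\,(u\cdot\nabla\phi)+\int P\,\phi\,\text{div\,}u$. Collecting the coercive pieces on the left yields
\[\tfrac14\tfrac{d}{dt}\int\rho|u|^{4}+\mathcal Q=\int P\,(u\cdot\nabla\phi)+\int P\,\phi\,\text{div\,}u,\]
where $\mathcal Q=\frac{\mu}{2}\int|\nabla\phi|^{2}+\mu\int\phi|\nabla u|^{2}+(\mu+\lambda)\int\phi(\text{div\,}u)^{2}+(\mu+\lambda)\int\text{div\,}u\,(u\cdot\nabla\phi)$. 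The pressure terms I would split by Young's inequality into $\varepsilon\int|\nabla\phi|^{2}+\varepsilon\int\phi(\text{div\,}u)^{2}$ plus $C_{\varepsilon}\int P^{2}|u|^{2}$; since $P=R\rho\theta$ and $\|\rho\|_{L^{\infty}(0,T;L^{\infty})}\le M$ by \eqref{cond1part}, the last integral is bounded by $R^{2}M\int\rho|u|^{2}\theta^{2}$, which is precisely the right-hand side of \eqref{3.23}, and no $\nabla\theta$ is generated.

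Third, the crux is the coercivity of $\mathcal Q$, and this is exactly where $\lambda<3\mu$ enters. The only sign-indefinite contribution is the cross term $(\mu+\lambda)\int\text{div\,}u\,(u\cdot\nabla\phi)$, and I would prove that the full quadratic form in $\nabla u$ is pointwise positive definite where $u\neq0$. Setting $\hat u=u/|u|$, $b=(\nabla u)^{\mathrm{tr}}\hat u$, $s=\hat u\cdot b=\hat u^{\mathrm{tr}}(\nabla u)\hat u$ and $t=\text{div\,}u$, one has $|\nabla\phi|^{2}=4\phi|b|^{2}$ and $u\cdot\nabla\phi=2\phi s$, so the integrand of $\mathcal Q$ equals $\phi\,E$ with $E=\mu|\nabla u|^{2}+2\mu|b|^{2}+(\mu+\lambda)t^{2}+2(\mu+\lambda)ts$. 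Completing the square gives $E=\mu|\nabla u|^{2}+2\mu|b|^{2}+(\mu+\lambda)(t+s)^{2}-(\mu+\lambda)s^{2}$. Since the entries of $\nabla u$ that do not enter $b,s,t$ contribute only to $\mu|\nabla u|^{2}$, the minimum of $E/|\nabla u|^{2}$ is attained on diagonal $\nabla u$, where (using the symmetry of the two transverse directions) it reduces to positivity of a $2\times2$ form whose determinant equals $2(10\mu^{2}+5\mu\lambda-2\lambda^{2})$, strictly positive for $\lambda<3\mu$ given $2\mu+3\lambda\ge0$ in \eqref{nares}. Hence $E\ge c|\nabla u|^{2}$ for some $c=c(\mu,\lambda)>0$, so $\mathcal Q\ge c\int\phi|\nabla u|^{2}$, and because $|\nabla\phi|^{2}\le4\phi|\nabla u|^{2}$ and $\phi(\text{div\,}u)^{2}\le3\phi|\nabla u|^{2}$, the form $\mathcal Q$ also dominates $\int|\nabla\phi|^{2}$ and $\int\phi(\text{div\,}u)^{2}$.

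Finally, I would choose $\varepsilon$ small relative to $c$ to absorb the $\varepsilon$-terms of the pressure estimate into $\mathcal Q$, which leaves $\frac{d}{dt}\int\rho|u|^{4}+\int_{\{|u|>0\}}|u|^{2}|\nabla u|^{2}\le C_{5}\int\rho|u|^{2}\theta^{2}$ after renaming constants. The main obstacle is the positive-definiteness step: a direct Cauchy--Schwarz bound $|u\cdot\nabla\phi|\le\sqrt{\phi}\,|\nabla\phi|$ only reaches $\lambda<\mu$, and even completing the square while discarding $(t+s)^{2}$ only reaches $\lambda<2\mu$, so attaining the stated $\lambda<3\mu$ forces one to retain the full structure of the cross term and carry out the reduction to diagonal gradients rather than estimating crudely.
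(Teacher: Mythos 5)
Your proof is correct, and while it opens the same way as the paper's (testing the momentum equation against $4|u|^{2}u$ and arriving at the identity the paper labels \eqref{3.24}, with the same coercive terms and the same cross term $-8(\mu+\lambda)\int\text{div\,}u\,|u|\,u\cdot\nabla|u|$), it resolves the crucial positivity step by a genuinely different mechanism. The paper follows the Wen--Zhu/Li--Xu--Zhu scheme: it splits $|\nabla u|^{2}=|u|^{2}|\nabla(u/|u|)|^{2}+|\nabla|u||^{2}$, introduces the auxiliary function $\phi(\varepsilon_{0})=\mu/(\lambda+\mu\varepsilon_{0})$, and argues by a dichotomy on whether $\int|u|^{4}|\nabla(u/|u|)|^{2}$ exceeds $\phi(\varepsilon_{0})\int|u|^{2}\big|\nabla|u|\big|^{2}$, using in the second case the identity $\text{div\,}u=|u|\,\text{div\,}(u/|u|)+\hat u\cdot\nabla|u|$ and the pointwise bound $|\text{div\,}(u/|u|)|^{2}\le 3|\nabla(u/|u|)|^{2}$. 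You instead prove that the integrand itself, viewed as a quadratic form in the nine entries of $\nabla u$ at a point where $u\neq0$, is positive definite: the off-diagonal entries (in coordinates with $\hat u=e_{1}$) decouple with positive coefficients, and the diagonal part reduces, after splitting the two transverse directions into symmetric and antisymmetric combinations, to a $2\times2$ form with leading coefficient $6\mu+3\lambda$ and determinant $2(10\mu^{2}+5\mu\lambda-2\lambda^{2})$; I have checked this computation, and both quantities are indeed strictly positive for $\lambda<3\mu$ under the physical restriction $2\mu+3\lambda\ge0$ (in fact up to $\lambda<\frac{5+\sqrt{105}}{4}\mu\approx3.81\mu$, so your argument even shows the coercivity persists slightly beyond $3\mu$ for this particular functional $\int\rho|u|^{4}$). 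What each approach buys: yours is shorter, entirely pointwise and algebraic, avoids manipulating the singular field $u/|u|$ inside the estimates, and dispenses with the case analysis; the paper's integral dichotomy is the standard route in the compressible literature and is the one that adapts to other weights $\rho|u|^{r}$, where the corresponding pointwise form need not be definite and cancellations visible only at the integral level must be exploited, but for the exponent $4$ used here your direct argument suffices.
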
\begin{proof}
Multiplying the momentum equations by $4|u|^{2}u$ and integrating on $\mathbb{R}^{3}$, we find
\be  \ba\label{3.24}
&\frac{d}{dt}\int\
\rho|u|^4+\int_{{\mathbb{R}^3}\cap\{|u|>0\}} 4|u|^{2}\B[\mu|\nabla
u|^2+(\lambda+\mu)|\text{div\,}u|^2+2\mu|\nabla|u||^2\B]\\
=&4\int_{{\mathbb{R}^3}\cap\{|u|>0\}}
\text{div\,}(|u|^{2}u)P-8(\mu+\lambda)\int_{{\mathbb{R}^3}\cap\{|u|>0\}}\text{div\,}u|u|u\cdot\nabla|u|
\\=& K_1+K_2.
\ea\ee
Using the Young inequality twice, for $\varepsilon_{0}\in(0,\f14)$, we get
\be\ba\label{3.25}
K_1&=4R\int_{{\mathbb{R}^3}\cap\{|u|>0\}}
(|u|^{2}\mathrm{div\,}u+2u\cdot\nabla u\cdot u)\rho\theta
\\&\leq 2\mu \varepsilon_{0} \int |u|^{2}|\nabla u|^{2}+C\int\rho |u|^{2}|\theta|^{2} +2\mu \varepsilon_{0} \int |u|^{2}|\nabla u|^{2}+C\int\rho |u|^{2}|\theta|^{2}
\\&\leq  4\mu  \varepsilon_{0} \int |u|^{2}|\nabla u|^{2}+C\int\rho |u|^{2}|\theta|^{2}.
\ea\ee
By the Cauchy inequality, we have
\be\ba\label{3.27}
&K_2&\leq 4 (\lambda+\mu)\int_{{\mathbb{R}^3}\cap\{|u|>0\}} |u|^{2}|\text{div\,}u|^2 +4(\mu+\lambda) \int_{{\mathbb{R}^3}\cap\{|u|>0\}}|u|^{2}\big|\nabla|u|\big|^2.
\ea\ee
Substituting \eqref{3.25} and
\eqref{3.27} into \eqref{3.24}, we conclude that
 \be
\ba\frac{d}{dt}\int\
\rho|u|^4&+4\mu \int_{{\mathbb{R}^3}\cap\{|u|>0\}} |u|^{2}|\nabla u|^2+8\mu \int_{{\mathbb{R}^3}\cap\{|u|>0\}}|u|^{2}\big|\nabla|u|\big|^2
\\
\le& 4\mu  \varepsilon_{0} \int |u|^{2}|\nabla u|^{2}+C\int\rho |u|^{2}|\theta|^{2} + 4(\mu+\lambda) \int_{{\mathbb{R}^3}\cap\{|u|>0\}}|u|^{2}\big|\nabla|u|\big|^2.
\ea\ee
By means of $|\nabla
u|^2=|u|^2\left|\nabla\left(\frac{u}{|u|}\right)
\right|^2+\big|\nabla|u|\big|^2, $
we further derive that
$$
\ba\frac{d}{dt}&\int\
\rho|u|^4+4\mu (1-\varepsilon_{0})\int_{{\mathbb{R}^3}\cap\{|u|>0\}}|u|^{2}\B(\big|\nabla
|u|\big|^2+ |u|^2\left|\nabla
\left(\frac{u}{|u|}\right)\right|^2\B)
\\&\ \ \ \ \ \ \ \ \ \ \ \ +8\mu\int_{{\mathbb{R}^3}\cap\{|u|>0\}}|u|^{2}\big|\nabla|u|\big|^2
\\
\le&  C\int\rho |u|^{2}|\theta|^{2} + 4(\mu+\lambda) \int_{{\mathbb{R}^3}\cap\{|u|>0\}}|u|^{2}\big|\nabla|u|\big|^2\\
&\ \ \ \ \ \ \ \ \ \ \ \ \ .
\ea$$
that is,
\be\label{c16}
\ba\frac{d}{dt}&\int\
\rho|u|^4+4\int_{{\mathbb{R}^3}\cap\{|u|>0\}}
\mu(1-\varepsilon_0)|u|^4\big|\nabla(\f{u}{|u|})\big|^2+\left(\mu(2-\varepsilon_0)-\lambda\right)|u|^2\big|\nabla|u|\big|^2\\
&\leq  C\int\rho |u|^{2}|\theta|^{2},
 \ea\ee
 if $\lambda<(2-\varepsilon_0)\mu$, then we have already proved the Lemma \ref{lema3.2}. To deal with the case of $\lambda\geq(2-\varepsilon_0)\mu$, we define the function introduced by \cite{[LXZ]} below
$$
\phi(\varepsilon_0)=\left\{\begin{array}{l}
\frac{\mu }{ (\lambda+\mu\varepsilon_0)}, \ \ \ {\rm if} \ \ \ \lambda+\mu\varepsilon_0>0, \\
[3mm] 0, \ \ \ \ \ \ \ \ \ \ \ \ \ \ \ \ \ {\rm otherwise}.
\end{array}
\right.
$$
{\bf Case 1:} Assume that
\be
\label{non-blow-up:5.8}\int_{{\mathbb{R}^3}\cap\{|u|>0\}}
|u|^4\left|\nabla\left(\frac{u}{|u|}\right)\right|^2>
\phi(\varepsilon_0)\int_{{\mathbb{R}^3}\cap\{|u|>0\}}
|u|^{2}\big|\nabla|u|\big|^2.
\ee
Combining the \eqref{c16} and \eqref{non-blow-up:5.8}, we have
\be
\ba\frac{d}{dt}&\int\
\rho|u|^4+4f(\varepsilon_0)\int_{{\mathbb{R}^3}\cap\{|u|>0\}}
|u|^2\big|\nabla|u|\big|^2\leq  C\int\rho |u|^{2}|\theta|^{2},
 \ea\ee
 where
 \be\label{a2}\ba f(\varepsilon_0)&=\mu(1-\varepsilon_0)\phi(\varepsilon_0)+\left(\mu(2-\varepsilon_0)-\lambda\right)=\f{\mu^2(1-\varepsilon_0)}{\lambda+\mu\varepsilon_0}+(2-\varepsilon_0)\mu-\lambda\\
 &=-\f{\left(\lambda-a_1(\varepsilon_0)\mu\right)\left(\lambda-a_2(\varepsilon_0)\mu\right)}{\lambda+\mu\varepsilon_0},
 \ea\ee
 in which \be\ba
 a_1(\varepsilon_0)=1-\varepsilon_0+\sqrt{2-\varepsilon_0},\ \ a_2(\varepsilon_0)=1-\varepsilon_0-\sqrt{2-\varepsilon_0}.
 \ea\ee
 Since $\lambda-a_2(\varepsilon_0)\mu=\mu \varepsilon_0+\lambda +(\sqrt{2-\varepsilon_0}-1)\mu>0$ when $\lambda+\mu\varepsilon_0>0$,
then by the condition $(2-\varepsilon_0)\mu\leq\lambda <3\mu$, we can easily check that $f(\varepsilon_0)>0$.

Hence, there holds
$$
\frac{d}{dt}\int\
\rho|u|^4+ \int_{{\mathbb{R}^3}\cap\{|u|>0\}} |u|^{2} |\nabla
u|^2\leq C\int\rho |u|^{2}|\theta|^{2}
.$$
{\bf Case 2:} if
\be\label{blow-up:5.3}
\int_{{\mathbb{R}^3}\cap\{|u|>0\}}|u|^4
\left|\nabla\left(\frac{u}{|u|}\right)
\right|^2\le\phi(\varepsilon_0)
\int_{{\mathbb{R}^3}\cap\{|u|>0\}}
|u|^{2}\big|\nabla|u|\big|^2.\ee
 A direct calculation gives for $|u|>0$ \be\label{blow-up:5.5}\begin{split}
&\text{div\,}u=|u|\text{div\,}\left(\frac{u}{|u|}\right)+\frac{u\cdot\nabla|u|}{|u|}.
\end{split}\ee
Plugging this into the last term of \eqref{3.24}, we write
 \be
\begin{split}&\frac{d}{dt}\int\  \rho|u|^4+4\int_{{\mathbb{R}^3}\cap\{|u|>0\}} |u|^{2}\left(\mu|\nabla u|^2+(\lambda+\mu)|\text{div\,}u|^2+2\mu \big|\nabla|u|\big|^2\right)\\
=& K_1-8(\mu+\lambda)\int_{{\mathbb{R}^3}\cap\{|u|>0\}}|u|^{2}\text{div\,}\left(\frac{u}{|u|}\right)u\cdot\nabla|u|
-8(\mu+\lambda)\int_{{\mathbb{R}^3}\cap\{|u|>0\}} \big|u\cdot\nabla|u|\big|^2,
\end{split}\ee
namely, \be \label{dt rho u r}
\begin{split}&\frac{d}{dt}\int\  \rho|u|^4+4\int_{{\mathbb{R}^3}\cap\{|u|>0\}}  H
= K_1,
\end{split}\ee  where
\be \begin{split} H=&\mu|u|^2|\nabla
u|^2+(\lambda+\mu)|u|^2|\text{div\,}u|^2+2\mu |u|^2\big|\nabla|u|\big|^2
\\&+2(\mu+\lambda)|u|^2\text{div\,}\left(\frac{u}{|u|}\right)u\cdot\nabla|u|
+2(\mu+\lambda)\big|u\cdot\nabla|u|\big|^2.\end{split}
\ee
Combining this and \eqref{3.25}, we deduce
\be \frac{d}{dt}\int\  \rho|u|^4+4\int_{{\mathbb{R}^3}\cap\{|u|>0\}}  G
\leq C_{6}\int\rho |u|^{2}|\theta|^{2},
 \ee  where
\be \begin{split} G=&\mu(1-\varepsilon_{0})|u|^2|\nabla
u|^2+(\lambda+\mu)|u|^2|\text{div\,}u|^2+2\mu|u|^2\big|\nabla|u|\big|^2
\\&+2(\mu+\lambda)|u|^2\text{div\,}\left(\frac{u}{|u|}\right) u\cdot\nabla|u|+2(\mu+\lambda)\big|u\cdot\nabla|u|\big|^2.
\end{split}
\ee
Now we analyze the positiveness of the term $G$ as follows
\be \label{c17}\begin{split}
G&=\mu(1-\varepsilon_{0})|u|^4|\nabla(\f{u}{|u|})
|^2+\mu(3-\varepsilon_{0})|u|^2\big|\nabla|u|\big|^2\\
&\ \ +3(\mu+\lambda)\left(u\cdot\nabla |u|+\f{2}{3}|u|^2\text{div\,}(\f{u}{|u|})\right)^2-\f{1}{3}(\mu+\lambda)|u|^4\big|\text{div\,}(\f{u}{|u|})\big|^2\\
&\geq \mu(1-\varepsilon_{0})|u|^4\big|\nabla(\f{u}{|u|})\big|^2+\mu(3-\varepsilon_{0})|u|^2\big|\nabla|u|\big|^2-(\mu+\lambda)
|u|^4\big|\nabla(\f{u}{|u|})\big|^2\\
&\geq -(\lambda+\varepsilon_0\mu)|u|^4\big|\nabla(\f{u}{|u|})\big|^2+\mu(3-\varepsilon_{0})|u|^2\big|\nabla|u|\big|^2\\
&\geq \left(\mu(3-\varepsilon_0)-\phi(\varepsilon_0)(\lambda+\varepsilon_0\mu)\right)|u|^2\big|\nabla|u|\big|^2\\
&\geq \mu(2-\varepsilon_0)|u|^2\big|\nabla|u|\big|^2,
\end{split}
\ee
where we have used the fact $|\text{div\,}(\f{u}{|u|})|^2\leq 3|\nabla (\f{u}{|u|})|^2$.
With this in hand, we know that
\be \label{key4}
\frac{d}{dt}\int\ \rho|u|^4+ \int_{{\mathbb{R}^3}\cap\{|u|>0\}}
|u|^2\big|\nabla u\big|^2\leq C\int\rho |u|^{2}|\theta|^{2} ,\ee
Summary, no matter in which case, we always have \eqref{3.23}. This proves Lemma \ref{lema3.2}.
\end{proof}

\begin{lemma}\label{lemma3.3} Suppose that \eqref{cond1} is valid and $\lambda<3\mu$, then there holds
\be \label{3.39}
\sup_{0\leq t\leq T}\int\B[\f\mu2|\nabla u|^{2}+\rho\theta^{2}
+\rho|u|^4\B] + \kappa\int_{0}^{T}\int|\nabla \theta|^{2} + \int_0^T\int \rho |\dot{u}|^{2} +
|u|^2\big|\nabla u\big|^2\leq C.\ee
\end{lemma}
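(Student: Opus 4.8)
The plan is to combine the two a priori inequalities already at hand, namely \eqref{key3} and Lemma \ref{lema3.2}, into a single Gronwall inequality for the energy
$$\mathcal{E}(t)=\int\B[\f\mu2|\nabla u|^{2}+(\mu+\lambda)(\text{div\,}u)^2+\f{1}{2\mu+\lambda}P^2-2P\text{div\,}u+\f{C_3C_\nu}{2}\rho\theta^2+C_7\rho|u|^4\B],$$
where $C_7$ is a constant to be fixed. First I would multiply \eqref{3.23} by some $C_7>C_4$ and add it to \eqref{key3}; the term $C_4\int|u|^2|\nabla u|^2$ on the right of \eqref{key3} is then absorbed by the dissipation $C_7\int_{\{|u|>0\}}|u|^2|\nabla u|^2$ produced on the left by \eqref{3.23}, leaving a positive multiple $c=C_7-C_4$ of $\int|u|^2|\nabla u|^2$ on the left. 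By the coercivity already established in the lemma giving \eqref{key3} (with $C_3$ large), and since $(\text{div\,}u)^2\le 3|\nabla u|^2$ and $\f{1}{2\mu+\lambda}P^2=\f{R^2}{2\mu+\lambda}\rho^2\theta^2\le CM\rho\theta^2$ under \eqref{cond1}, the functional $\mathcal{E}$ is equivalent to $\int[\f\mu2|\nabla u|^2+\rho\theta^2+\rho|u|^4]$. The outcome is a differential inequality $\f{d}{dt}\mathcal{E}+D\le C\int\rho\theta^3+C\int\rho|u|^2\theta^2$, where $D=\kappa\int|\nabla\theta|^2+\f12\int\rho|\dot u|^2+c\int|u|^2|\nabla u|^2$.

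The heart of the proof is to estimate the two right-hand terms, which are the compressible analogues of the energy flux \eqref{energy flux}, by a small part of $D$ plus a Gronwall factor times $\mathcal{E}$. For the first, I would write $\int\rho\theta^3\le\|\theta\|_{L^q}\|\rho\theta^2\|_{L^{q'}}$ with $q'=\f{q}{q-1}$ and interpolate $\rho\theta^2$ between $L^1$, where $\|\rho\theta^2\|_{L^1}=\int\rho\theta^2\le\mathcal{E}$ is the weighted energy, and $L^3$, where $\|\rho\theta^2\|_{L^3}\le M\|\theta\|_{L^6}^2\le CM\|\nabla\theta\|_{L^2}^2$ by \eqref{cond1} and Sobolev embedding. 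The restriction $q>\f32$ is exactly what guarantees $q'\le 3$, so this interpolation is legitimate, and it yields $\int\rho\theta^3\le C\|\theta\|_{L^q}\big(\int\rho\theta^2\big)^{1-\f{3}{2q}}\|\nabla\theta\|_{L^2}^{\f3q}$. Since $q>\f32$ forces the dissipation exponent $\f3q<2$, Young's inequality absorbs $\|\nabla\theta\|_{L^2}^{3/q}$ into $\varepsilon\|\nabla\theta\|_{L^2}^2$ and, after a short computation using $(1-\f{3}{2q})\cdot\f{2q}{2q-3}=1$, produces the term $C\|\theta\|_{L^q}^{p}\int\rho\theta^2$ with $p=\f{2q}{2q-3}$, i.e. exactly the exponent for which $\int_0^T\|\theta\|_{L^q}^p\,dt=\|\theta\|_{L^p(0,T;L^q)}^p$ is finite by \eqref{cond1}.

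The mixed term is treated in parallel, keeping every factor weighted by $\rho$ so as to bypass the vacuum. I would write $\int\rho|u|^2\theta^2\le\|\theta\|_{L^q}\|\rho|u|^2\theta\|_{L^{q'}}$ and again interpolate $\rho|u|^2\theta$ between $L^1$, where $\|\rho|u|^2\theta\|_{L^1}\le(\int\rho|u|^4)^{1/2}(\int\rho\theta^2)^{1/2}\le\mathcal{E}$ by Cauchy--Schwarz, and $L^3$, where $\|\rho|u|^2\theta\|_{L^3}\le M\||u|^2\|_{L^6}\|\theta\|_{L^6}\le C\||u|\,|\nabla u|\|_{L^2}\|\nabla\theta\|_{L^2}$, using $\|\nabla|u|^2\|_{L^2}\le C\||u|\,|\nabla u|\|_{L^2}$ and Sobolev embedding. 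The same arithmetic (the product of the two dissipations enters with total exponent $\f3q<2$) gives $\int\rho|u|^2\theta^2\le\varepsilon\big(\int|u|^2|\nabla u|^2+\int|\nabla\theta|^2\big)+C\|\theta\|_{L^q}^{p}\mathcal{E}$. Choosing $\varepsilon$ small enough to absorb all the $\varepsilon$-terms into $D$, I arrive at $\f{d}{dt}\mathcal{E}+\f12 D\le C\|\theta\|_{L^q}^p\,\mathcal{E}$ with $\|\theta\|_{L^q}^p\in L^1(0,T)$; Gronwall's lemma then bounds $\sup_t\mathcal{E}$, and a final integration in time bounds $\int_0^T D$, which together yield \eqref{3.39}.

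I expect the main obstacle to be the interpolation bookkeeping in the two preceding paragraphs: one must keep all quantities weighted by $\rho$ (since the presence of vacuum precludes controlling $\|\theta\|_{L^2}$ or $\|u\|_{L^2}$ directly) while simultaneously arranging that the power of $\|\theta\|_{L^q}$ comes out exactly as $p=\f{2q}{2q-3}$ and that the powers of the dissipative norms stay strictly below $2$. Both requirements hinge precisely on the scaling relation $\f2p+\f3q=2$ together with $q>\f32$.
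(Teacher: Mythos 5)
Your proposal is correct and follows essentially the same route as the paper: combine \eqref{key3} with Lemma \ref{lema3.2} via a large multiple of \eqref{3.23}, then control the flux terms $\int\rho\theta^3$ and $\int\rho|u|^2\theta^2$ by H\"older in $L^q$, interpolation, Sobolev embedding and Young's inequality to produce the Gronwall factor $\|\theta\|_{L^q}^{2q/(2q-3)}\in L^1(0,T)$. Your interpolation of $\rho\theta^2$ (resp.\ $\rho|u|^2\theta$) between $L^1$ and $L^3$ is just a rewriting of the paper's interpolation of $\rho^{1/2}\theta$ and $\rho^{1/2}|u|^2$ between $L^2$ and $L^6$ in \eqref{3.42}--\eqref{3.43}, so the two arguments coincide.
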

\begin{proof}
Multiplying the inequality  {\eqref{key4}} by $(C_{4}+1)$ and adding the result to the inequality \eqref{key3}, we can obtain
\be\ba\label{3.40}
\f{d}{dt}\int&\B[\f\mu2|\nabla u|^{2} +\rho\theta^{2}
+\rho|u|^4\B]+ \kappa\int|\nabla \theta|^{2}+\f{1}{2}\int \rho |\dot{u}|^{2}\\
&+\int_{{\mathbb{R}^3}\cap\{|u|>0\}}
|u|^2\big|\nabla u\big|^2\leq C_{7} \left(\int \rho|\theta|^{3}  +\int\rho |u|^{2}|\theta|^{2}\right).
 \ea\ee
At this stage, it suffices to bound the right hand side of
\eqref{3.40}.
Indeed, by the interpolation inequality, \eqref{cond1} and the Young inequality imply  that
\be\ba\label{3.42}
\int\rho |u|^{2}|\theta|^{2}&=\int\rho^{\f12}|\theta|\rho^{\f12}|u|^{2}|\theta|   \\
&\leq \|\theta\|_{L^{q}}\|\rho^{\f12}\theta\|_{L^{\f{2q}{q-1}}}
\|\rho^{\f12}|u|^{2}\|_{L^{\f{2q}{q-1}}}\\
&\leq \|\theta\|_{L^{q}}\|\rho^{\f12}\theta\|^{1-\f{3}{2q}}_{L^{2}}\|
\rho^{\f12}\theta\|^{\f{3}{2q}}_{L^{6}}\|\rho^{\f12}|u|^{2}\|_{L^{2}}^{1-\f{3}{2q}}\|
\rho^{\f12}|u|^{2}\|^{\f{3}{2q}}_{L^{6}}\\
&\leq  C \|\theta\|_{L^{q}}\|\rho^{\f12}\theta\|^{1-\f{3}{2q}}_{L^{2}}\|\rho^{\f12}|u|^{2}\|_{L^{2}}^{1-\f{3}{2q}}\|
 (\|
 \nabla\theta\|^{\f{3}{q}}_{L^{2}}+\|\nabla|u|^{2}\|^{\f{3}{q}}_{L^{2}})
 \\
&\leq  C \|\theta\|^{\f{2q}{2q-3}}_{L^{q}}\|\rho^{\f12}\theta\|_{L^{2}}\|\rho^{\f12}|u|^{2}\|_{L^{2}}
 +\eta_{1}
 \|\nabla\theta\|^{2}_{L^{2}}+\eta_{2}\|\nabla|u|^{2}\|^{2}_{L^{2}} \\
&\leq  C \|\theta\|^{\f{2q}{2q-3}}_{L^{q}}\big(\|\rho^{\f12}\theta\|_{L^{2}}^{2}+\|\rho^{\f12}|u|^{2}\|_{L^{2}}^{2}\big)
 +\eta_{1}
 \|\nabla\theta\|^{2}_{L^{2}}+\eta_{2}\|\nabla|u|^{2}\|^{2}_{L^{2}}  \ea
\ee
By similar above arguments, we can get
\be\ba \label{3.43}
\int\rho |\theta|^{3}&=\int\rho^{\f12}|\theta|\rho^{\f12}\theta|\theta|   \\
&\leq \|\theta\|_{L^{q}}\|\rho^{\f12}\theta\|_{L^{\f{2q}{q-1}}}^{2}\\
&\leq \|\theta\|_{L^{q}}\|\rho^{\f12}\theta\|^{2-\f{3}{q}}_{L^{2}}\|
\rho^{\f12}\theta\|^{\f{3}{q}}_{L^{6}}\\
&\leq  C \|\theta\|_{L^{q}}\|\rho^{\f12}\theta\|^{2-\f{3}{q}}_{L^{2}}\|
 \theta\|^{\f{3}{q}}_{L^{6}}\\
&\leq  C \|\theta\|_{L^{q}}\|\rho^{\f12}\theta\|^{2-\f{3}{q}}_{L^{2}}\|
 \nabla\theta\|^{\f{3}{q}}_{L^{2}}\\
&\leq  C \|\theta\|^{\f{2q}{2q-3}}_{L^{q}}\|\rho^{\f12}\theta\|_{L^{2}}^{2}
 +\eta_{3}\| \nabla\theta\|^{2}_{L^{2}}
 \ea
\ee
Substituting  \eqref{3.42} and \eqref{3.43} into \eqref{3.40}, we have
\be\ba\label{3.44}
&\f{d}{dt}\int\B[\f\mu2|\nabla u|^{2}+(\mu+\lambda)(\text{div\,}u  )^2+\f{1}{2\mu+\lambda}P^2 -2P\text{div\,}u +\f{C_3 C_\nu}{2}\rho\theta^2 +\f{C_4 +1}{2\mu}\rho| u|^4\B]\\
&\ \ + \f\kappa2\int|\nabla \theta|^{2}+\f{1}{2}\int \rho |\dot{u}|^{2}+\f{1}{2}\int_{{\mathbb{R}^3}\cap\{|u|>0\}}
|u|^2\big|\nabla u\big|^2\\
&\leq C \|\theta\|^{\f{2q}{2q-3}}_{L^{q}}\big(\|\rho^{\f12}\theta\|_{L^{2}}^{2}+\|\rho^{\f12}|u|^{2}\|_{L^{2}}^{2}\big)\\
&\leq C\|\theta\|_{L^q}^{\f{2q}{2q-3}}\B(\int\B[\f\mu2|\nabla u|^{2}+(\mu+\lambda)(\text{div\,}u  )^2+\f{1}{2\mu+\lambda}P^2 -2P\text{div\,}u\\
&\ \ \ \ \ \ \ \ \ \ \ \ \ \ \ \ \ \ \ \  +\f{C_3 C_\nu}{2}\rho\theta^2 +\f{C_4 +1}{2\mu}\rho| u|^4\B]\B),
 \ea\ee
 where we used the fact that
 $$\int\B[ (\mu+\lambda)(\text{div\,}u)^2+\f{1}{2\mu+\lambda}P^2-2P\text{div\,}u +\f{C_3C_\nu}{2}\rho\theta^2\B]\geq \int \rho\theta^2,$$
 provided that the constant $C_3$ is suitable large enough.

Then, The Gronwall lemma and \eqref{3.44} enables us to obtain that
 \be\sup\limits_{0\le t\le T}\int\ (\rho\theta^{2}+|\nabla u|^2+\rho |u|^4) +\int_0^T\int\ \rho |\dot{u}|^2+|\nabla\theta|^{2}+|u|^2\big|\nabla u\big|^2\,\le C.\ee
\end{proof}

\begin{proof}[Proof of Theorem \ref{th1.1}]

With Lemma  \ref{lemma3.3} at our disposal, according to \eqref{cond1} and \eqref{HL2} (alternatively, \eqref{wzsiam}), we completes the proof of this theorem.
\end{proof}

 \subsection{Without extra constraint on   the coefficients of viscosity}

As mentioned in the last subsection, it suffices to prove Lemma \ref{lemma3.3} without $\lambda<3\mu$ to show Theorem \ref{the1.2}.

\begin{proof}[Proof of Lemma \ref{lemma3.3} without $\lambda<3\mu$]
As the Lemma \ref{lema3.2}, there holds
\be  \ba\label{34.1}
&\frac{d}{dt}\int\
\rho|u|^4+\int 4|u|^{2}\B[\mu|\nabla
u|^2+(\lambda+\mu)|\text{div\,}u|^2+2\mu|\nabla|u||^2\B]\\
=&4\int
\text{div\,}(|u|^{2}u)P-8(\mu+\lambda)\int \text{div\,}u|u|u\cdot\nabla|u|
\\=& L_1+L_2.
\ea\ee
Making use of the Young inequality twice, we have
\be\ba\label{34.2}
L_1&=4R\int
(|u|^{2}\mathrm{div\,}u+2u\cdot\nabla u\cdot u)\rho\theta
\\&\leq \eta_{1}\int |u|^{2}|\nabla u|^{2}+C\int\rho |u|^{2}|\theta|^{2} +\eta_{2}\int |u|^{2}|\nabla u|^{2}+C\int\rho |u|^{2}|\theta|^{2}
\\&\leq  (\eta_{1}+\eta_{2})\int |u|^{2}|\nabla u|^{2}+C\int\rho |u|^{2}|\theta|^{2}.
\ea\ee
Similarly,
\be\ba\label{34.3}
&L_2&\leq C(\eta)\int |u|^{2}|\text{div\,}u|^2 +\eta\int|u|^{2}\big|\nabla|u|\big|^2.
\ea\ee
Plugging \eqref{34.2} and \eqref{34.3} into \eqref{34.1},
we get
 \be  \ba  \label{34.4}
&\frac{d}{dt}\int\
\rho|u|^4+2\mu\int|u|^{2}|\nabla
u|^2\leq C \int\rho |u|^{2}|\theta|^{2}
 +C\int |u|^{2}|\text{div\,}u|^2.
  \ea\ee
  Adding  $\eqref{34.4}$ multiplied by  $ \f{C_{4}+1}{2\mu}$ to $  \eqref{key3}$, we have
\be\ba\label{34.5}
&\f{d}{dt}\int\B[\f\mu2|\nabla u|^{2}+(\mu+\lambda)(\text{div\,}u  )^2+\f{1}{2\mu+\lambda}P^2 -2P\text{div\,}u +\f{C_3 C_\nu}{2}\rho\theta^2 \\
&\ \ \ \ +\f{C_4 +1}{2\mu}\rho| u|^4\B]+ \kappa\int|\nabla \theta|^{2}+\f{1}{2}\int \rho |\dot{u}|^{2}+\mu\int|u|^{2}|\nabla
u|^2 \\
&\leq C  \int \rho|\theta|^{3}   +C \int\rho |u|^{2}|\theta|^{2} + C \int |u|^{2}|\text{div\,}u|^2.
\ea\ee
 Case 1:
 \be
  \|\rho\|_{L^{\infty}(0,T;L^{\infty})}+\|\text{div\,}u\|_{L^{2}(0,T;L^{3})}
  +\|\theta\|_{L^{p}(0,T;L^{q})}\leq C.\label{cond2}\ee
With the help of H\"older inequality and Sobolev inequality, we get
\be\ba\label{34.6}
 \int |u|^{2}|\text{div\,}u|^2
 &\leq C\B(\int |u|^{6}\B) ^{\f13}\B(\int  |\text{div\,}u|^3 \B) ^{\f23}\\
  &\leq C\B(\int |\nabla u|^{2}\B) \B(\int  |\text{div\,}u|^3 \B) ^{\f23}.
 \ea\ee
Inserting  \eqref{3.42}, \eqref{3.43} and \eqref{34.6} into \eqref{34.5}, we conclude that
  \be\ba\label{34.7}
&\f{d}{dt}\int\B[\f\mu2|\nabla u|^{2}+(\mu+\lambda)(\text{div\,}u  )^2+\f{1}{2\mu+\lambda}P^2 -2P\text{div\,}u +\f{C_3 C_\nu}{2}\rho\theta^2\\
&\ \  +\f{C_4 +1}{2\mu}\rho| u|^4\B]+ \kappa\int|\nabla \theta|^{2}+\f{1}{2}\int \rho |\dot{u}|^{2}+\mu\int|u|^{2}|\nabla
u|^2\\&\leq C \|\theta\|^{\f{2q}{2q-3}}_{L^{q}}\big(\|\rho^{\f12}\theta\|_{L^{2}}^{2}+\|\rho^{\f12}|u|^{2}\|_{L^{2}}^{2}\big)
+ C_{8}\B(\int |\nabla u|^{2}\B) \B(\int  |\text{div\,}u|^3 \B) ^{\f23}.
 \ea\ee
\eqref{cond2} and Gronwall allow us to obtain  Lemma \ref{lemma3.3} without $\lambda<3\mu$.

  Case 2:  \be
  \| \text{div\,}u\|_{L^{1}(0,T;L^{\infty})}
  +\|\text{div\,}u\|_{L^{4}(0,T;L^{2})}
  +\|\theta\|_{L^{p}(0,T;L^{q})}\leq
  C.\label{cond3}
  \ee
  From the above arguments of Case 1,
   we just need prove the following estimate
  \be\ba\label{34.8}
 \int |u|^{2}|\text{div\,}u|^2
 &\leq C\B(\int |u|^{6}\B) ^{\f13}\B(\int  |\text{div\,}u|^3 \B) ^{\f23}\\
  &\leq C\B(\int |\nabla u|^{2}\B) \B(\int  |\text{div\,}u|^3 \B) ^{\f23}\\
  &\leq C\B(\int |\nabla u|^{2}\B) \B(\int  |\text{div\,}u|^2 \B) ^{\f23}\|\text{div\,}u\|_{L^{\infty}} ^{\f23}\\
  &\leq C\B(\int |\nabla u|^{2}\B) \B(\B(\int  |\text{div\,}u|^2 \B) ^{2}+\|\text{div\,}u\|_{L^{\infty}} \B),
 \ea\ee
where we have used
   the H\"older inequality, Sobolev embedding  and interpolation inequality.

   Case 3:  \be\label{cond4}
  \| \text{div\,}u\|_{L^{2}(0,T;L^{\infty})} +\|\theta\|_{L^{p}(0,T;L^{q})}\leq C.\ee
  From \eqref{eqofE}, we have
  \be\label{bofrctia}
  \|\rho\theta\|_{L^{\infty}(0,T;L^{1})}\leq C.
  \ee
  Taking the $L^{2}$ inner product of the second equations in \eqref{FNS} with $u$, we see that
  \be\ba\label{34.9}
\f{d}{dt}\int\rho|u|^{2}+\mu\int|\nabla u|^{2}+(\lambda+\mu)\int|\text{div\,} u|^{2}  \leq C \|\text{ div\,} u\|_{L^{\infty}}\|\rho\theta\|_{ L^{1} }
\ea\ee
  It follows from \eqref{bofrctia} and \eqref{34.9}
  that
  \be\label{intiesti}
  \int_{0}^{T}\int|\text{div\,} u|^{2}dx\leq C.
  \ee
  From the above arguments of Case 1 and Case 2, we just need prove the following estimate
\be\ba\label{3.4.15}
 \int |u|^{2}|\text{div\,}u|^2
 &\leq C\B(\int |u|^{6}\B) ^{\f13}\B(\int  |\text{div\,}u|^3 \B) ^{\f23}\\
  &\leq C\B(\int |\nabla u|^{2}\B) \B(\int  |\text{div\,}u|^3 \B) ^{\f23}\\
  &\leq C\B(\int |\nabla u|^{2}\B) \B(\int  |\text{div\,}u|^2 \B) ^{\f23}\|\text{div\,}u\|_{L^{\infty}} ^{\f23}\\
  &\leq C\B(\int |\nabla u|^{2}\B) \B(\B(\int  |\text{div\,}u|^2 \B) +\|\text{div\,}u\|^{2}_{L^{\infty}} \B).
 \ea\ee
 As the same  derivation of \eqref{34.7}, replacing \eqref{34.6} by \eqref{3.4.15}, we get
\be\ba\label{34.16}
& \f{d}{dt}\int\B[\f\mu2|\nabla u|^{2}+(\mu+\lambda)(\text{div\,}u  )^2+\f{1}{2\mu+\lambda}P^2 -2P\text{div\,}u +\f{C_3 C_\nu}{2}\rho\theta^2 \\
&\ \ +\f{C_4 +1}{2\mu}\rho| u|^4\B]+ \kappa\int|\nabla \theta|^{2}+\f{1}{2}\int \rho |\dot{u}|^{2}+2\mu\int|u|^{2}|\nabla
u|^2\\\leq& C \|\theta\|^{\f{2q}{2q-3}}_{L^{q}}\big(\|\rho^{\f12}\theta\|_{L^{2}}^{2}+\|\rho^{\f12}|u|^{2}\|_{L^{2}}^{2}\big)
+  C\B(\int |\nabla u|^{2}\B) \B(\B(\int  |\text{div\,}u|^2 \B) +\|\text{div\,}u\|^{2}_{L^{\infty}} \B).
  \ea\ee
Gronwall lemma
\eqref{intiesti}, \eqref{34.16}, and \eqref{cond4} yield Lemma \ref{lemma3.3} without $\lambda<3\mu$.
\end{proof}

 \section{Blow up criteria without vacuum}
As said in the last of subsection 3.1, it is enough to show
  Lemma \ref{lemma3.3} without $\lambda<3\mu$ to complete  the proof of Theorem \ref{th1.3} under the following hypothesis
 \be
  \|\rho, \ \rho^{-1}|_{L^{\infty}(0,T;L^{\infty})}+
  \|\text{div\,}u\|_{L^{p_{1}}(0,T;L^{q_{1}})}
  +\|\theta\|_{L^{p_{2}}(0,T;L^{q_{2}})}\leq C,\label{cond5}\ee
  for  $(p_{1},\,q_{1})$ and $(p_{2},\,q_{2})$ meeting
   \be\label{cond51}\f{2}{p_{1}}+\f{3}{q_{1}}=2,~ q_{1}>\f{3}{2},~~\f{2}{p_{2}}+\f{3}{q_{2}}=2, ~   q_{2}>\f{3}{2}. \ee
\begin{proof}[Proof of Lemma \ref{lemma3.3} without $\lambda<3\mu$]
From \eqref{34.1} and \eqref{34.2}, we see that
\be  \ba\label{4.2}
&\frac{d}{dt}\int\
\rho|u|^4+2\mu\int  |u|^{2} |\nabla
u|^2\leq C\int\rho |u|^{2}|\theta|^{2} +C\int |\text{div\,}u||u|^{2}|\nabla|u|.
\ea\ee
The interpolation inequality and Sobolev inequality allow us
to derive that, for  $2\leq\f{2q_{1}}{q_{1}-2}\leq6$,
\be\ba\label{4.3}
\|\rho^{\f{1}{2}}|u|^{2}\|_{L^{\f{2q_{1}}{q_{1}-2}}}&\leq
\|\rho^{\f{1}{2}}|u|^{2}\|^{1-\f{3}{q_{1}}}_{L^{2}}\|\rho^{\f{1}{2}}|u|^{2}\|^{\f{3}{q_{1}}}_{L^{6}}\\
&\leq C
\|\rho^{\f{1}{2}}|u|^{2}\|^{1-\f{3}{q_{1}}}_{L^{2}}
\| |u|^{2}\|^{\f{3}{q_{1}}}_{L^{6}}\\
&\leq C
\|\rho^{\f{1}{2}}|u|^{2}\|^{1-\f{3}{q_{1}}}_{L^{2}}
\| \nabla|u|^{2}\|^{\f{3}{q_{1}}}_{L^{2}}.
\ea\ee
In the light of the H\"older inequality, \eqref{4.3}, and the Young inequality, we find
 \be\ba\label{4.5}
\int |\text{div\,}u|u|^{2}|\nabla|u|&\leq C \int|\text{div\,}u|\rho^{\f{1}{2}}|u|^{2} | |\nabla u|\\
&\leq C\|\text{div\,}u\|_{L^{q_{1}}}\|\rho^{\f{1}{2}}|u|^{2}\|_{L^{\f{2q_{1}}{q_{1}-2}}}\|\nabla u\|_{L^{2}}\\
&\leq C(\eta)\|\text{div\,}u\|_{L^{q_{1}}}\|\rho^{\f{1}{2}}|u|^{2}\|^{1-\f{3}{q_{1}}}_{L^{2}}\|\rho^{\f{1}{2}}|u|^{2}\|^{\f{3}{q_{1}}}_{L^{6}}\|\nabla u\|_{L^{2}}\\
&\leq \eta\|\nabla|u|^{2}\|^{2}_{L^{2}}+C(\eta)\|\text{div\,}u\|^{\f{2q_1}{2q_1-3}}_{L^{q_1}}\|\rho^{\f{1}{2}}|u|^{2}\|^{ \f{2(q_{1}-3)}{2q_{1}-3}}_{L^{2}}\|\nabla u\|_{L^{2}}^{\f{2q_{1}}{2q_{1}-3}}\\
&\leq \eta\|\nabla|u|^{2}\|^{2}_{L^{2}}+
C(\eta)\|\text{div\,}u\|^{\f{2q_{1}}{2q_{1}-3}}_{L^{q_{1}}}
\B(\|\rho^{\f{1}{2}}|u|^{2}\|^{ 2}_{L^{2}}+\|\nabla u\|_{L^{2}}^{2}\B).
\ea\ee
It follows from \eqref{4.2} and \eqref{4.5} that
 \be  \ba \label{key5}
\frac{d}{dt}&\int\
\rho|u|^4+2\mu\int|u|^{2}|\nabla
u|^2\\
&\leq C_{8}\int\rho |u|^{2}|\theta|^{2}
+C_{8}\B(\int\rho|u|^{4}+\int  |\nabla u|^2 \B) \|\text{div\,}u\|^{\f{2q_{1}}{2q_{1}-3}}_{L^{q_{1}}}.
  \ea\ee

Adding  $\eqref{key5}$ multiplied by  $ \f{C_{4}+1}{2\mu}$ to $  \eqref{key3}$, we arrive at
 \be\ba
&\f{d}{dt}\int\B[\f\mu2|\nabla u|^{2}+(\mu+\lambda)(\text{div\,}u  )^2+\f{1}{2\mu+\lambda}P^2 -2P\text{div\,}u +\f{C_3 C_\nu}{2}\rho\theta^2 \\
&\ \ +\f{C_4 +1}{2\mu}\rho| u|^4\B]+ \kappa\int|\nabla \theta|^{2}+\f{1}{2}\int \rho |\dot{u}|^{2}+2\mu\int|u|^{2}|\nabla
u|^2 \\&\leq C  \int \rho^{2}|\theta|^{3}  +C \int\rho |u|^{2}|\theta|^{2}
+C \B(\int\rho|u|^{4}+\int  |\nabla u|^2 \B) \|\text{div\,}u\|^{\f{2q_1}{2q_1-3}}_{L^{q_1}} .
 \ea\ee
 Then, we use \eqref{3.42} and \eqref{3.43} to further obtain
  \be\ba
\f{d}{dt}\int& \B[\f\mu2|\nabla u|^{2}+\rho\theta^{2}
+\rho|u|^4
\B]+ \kappa\int|\nabla \theta|^{2}+\f{1}{2}\int \rho |\dot{u}|^{2}+2\mu\int|u|^{2}|\nabla
u|^2\\&\leq C \|\theta\|^{\f{2q_{2}}{2q_{2}-3}}_{L^{q_{2}}}\big(\|\rho^{\f12}\theta\|_{L^{2}}^{2}+\|\rho^{\f12}|u|^{2}\|_{L^{2}}^{2}\big)
+ C \B(\int\rho|u|^{4}+\int  |\nabla u|^2 \B) \|\text{div\,}u\|^{\f{2q_{1}}{2q_{1}-3}}_{L^{q_{1}}}.
\ea\ee
This proves the whole lemma.
\end{proof}

		\section*{Acknowledgement}
Jiu was partially
supported by the National Natural Science Foundation of China (No. 11671273) and by Beijing Natural
Science Foundation (No. 1192001).
  Wang was partially supported by  the National Natural
Science Foundation of China under grant (No. 11971446  and No. 11601492). Ye is supported by the NSFC (No. 11701145).

	\end{document}